\newtheorem{Theorem}{Theorem}[section]
\newaliascnt{Lemma}{Theorem}
\newtheorem{Lemma}[Lemma]{Lemma}
\newaliascnt{Proposition}{Theorem}
\newtheorem{Proposition}[Proposition]{Proposition}
\newaliascnt{Corollary}{Theorem}
\newtheorem{Corollary}[Corollary]{Corollary}
\newaliascnt{Question}{Theorem}
\newtheorem{Question}[Question]{Question}
\numberwithin{equation}{section}
\renewcommand{\phi}{\varphi}
\newcommand{\C}{\operatorname{C}}
\newcommand{\Z}{\operatorname{Z}}
\newcommand{\K}{\operatorname{K}}
\newcommand{\ZZ}{\mathbb{Z}}
\newcommand{\CC}{\mathbb{C}}
\newcommand{\FF}{\mathbb{F}}
\newcommand{\cohom}{\operatorname{H}}
\newcommand{\Aut}{\operatorname{Aut}}
\newcommand{\Out}{\operatorname{Out}}
\newcommand{\pcore}{\mathrm{O}}
\newcommand{\GL}{\operatorname{GL}}
\newcommand{\SL}{\operatorname{SL}}
\newcommand{\End}{\operatorname{End}}
\newcommand{\Irr}{\operatorname{Irr}}
\newcommand{\IBr}{\operatorname{IBr}}
\newcommand{\F}{\mathrm{F}}
\newcommand{\Syl}{\operatorname{Syl}}
\newcommand{\Ker}{\operatorname{Ker}}
\newcommand{\lcm}{\operatorname{lcm}}
\mathchardef\ordinarycolon\mathcode`\:  %defines a nice ":=" 
\title{On the blockwise modular\\ isomorphism problem}
\author{Gabriel Navarro\footnote{Department of Mathematics, Universitat de Val\`encia, 46100 Burjassot,
        Spain, \href{mailto:gabriel@uv.es}{gabriel@uv.es}} \ and Benjamin Sambale\footnote{Fachbereich Mathematik, TU Kaiserslautern, 67653 Kaiserslautern, Germany, \href{mailto:sambale@mathematik.uni-kl.de}{sambale@mathematik.uni-kl.de}}}
\date{\today}
\begin{document}
\frenchspacing
\maketitle
\begin{abstract}\noindent
As a generalization of the modular isomorphism problem we study the behavior of defect groups under Morita equivalence of blocks of finite groups over algebraically closed fields of positive characteristic. We prove that the Morita equivalence class of a block $B$ of defect at most $3$ determines the defect groups of $B$ up to isomorphism. In characteristic $0$ we prove similar results for metacyclic defect groups and $2$-blocks of defect $4$.
In the second part of the paper we investigate the situation for $p$-solvable groups $G$. Among other results we show that the group algebra of $G$ itself determines if $G$ has abelian Sylow $p$-subgroups.
\end{abstract}

\textbf{Keywords:} modular isomorphism problem, Morita equivalence, blocks, defect groups\\
\textbf{AMS classification:} 20C05, 20C20

\section{Introduction}
The longstanding \emph{modular isomorphism problem} asks if finite $p$-groups $G$ and $H$ are isomorphic whenever their group algebras $\FF_pG$ and $\FF_pH$ over the field of $p$ elements are isomorphic. In the study of the
global/local conjectures in representation theory we do not often encounter
isomorphism of algebras, but a much weaker situation: \emph{Morita equivalences} of categories of blocks. And yet we do not know the answer to the following fundamental question:

\begin{Question}\label{Q1}
Let $B$ be a block of a finite group $G$ with respect to an algebraically closed field $F$ of characteristic $p>0$. Does the Morita equivalence class of $B$ determine the isomorphism type of a defect group $D$ of $B$?
\end{Question}

If $G$ and $H$ are $p$-groups such that $\FF_pG\cong\FF_pH$, then 
\[B_0(FG)=FG\cong F\otimes_{\FF_p}\FF_pG\cong F\otimes_{\FF_p}\FF_pH\cong FH=B_0(FH)\]
where $B_0$ denotes the principal block. Since isomorphic blocks are certainly Morita equivalent, \autoref{Q1} is actually an extension of the modular isomorphism problem.

This paper started as a systematic search for a counterexample to \autoref{Q1} which, somewhat surprisingly, does not appear to have been done before. Instead of a counterexample, we have been able to answer \autoref{Q1} positively
for blocks of small defect, and in some other situations. This has required to combine a good deal of previous 
theoretical results by many authors together with some new ideas, and heavy use of computers.
Of course, we are bound by the modular isomorphism problem which up to date, has only been answered for $p$-groups of order at most $p^5$ ($2^9$ and $3^6$ for $p=2$ and $p=3$ respectively, see \cite[Introduction]{EickKonovalov}). 
It does not seem unreasonable to think that there are counterexamples to both the modular isomorphism
problem and \autoref{Q1}, but due to the fact that group algebras are exponentially bigger than groups, perhaps these are not within the reach of computers yet. 

In the situation of \autoref{Q1} the following invariants are known to be determined by the Morita equivalence class of $B$:
\begin{itemize}
\item The \emph{Cartan matrix} $C$ of $B$ up to the order of the simple modules. In particular, the number $l(B)$ of irreducible Brauer characters of $B$ is determined.
\item The \emph{order} $|D|$ of $D$ is the largest elementary divisor of $C$. In particular, the \emph{defect} of $B$ is determined.
\item The \emph{exponent} $\exp(D)$ of $D$ is determined via Külshammer ideals (see \cite[(78)]{KProgress}).
\item The \emph{rank} $r(D)$ of $D$ is determined via the complexity of the indecomposable modules (see \cite[Corollary~4]{AlpEv81} or Bessenrodt~\cite[Proposition~2.1]{Bessenrodtnewinv}).
\item It is determined if $D$ is dihedral, semidihedral or quaternion via the representation type (here $p=2$).
\item If $D$ is known to be abelian (or even hamiltonian), then the isomorphism type of $D$ is determined (see Bessenrodt~\cite[Theorems~2.1 and 7.4]{Bessenrodtabel}).
\item The isomorphism type of the center $\Z(B)$ of $B$. In particular, the number $k(B)$ of ordinary irreducible characters in $B$ is determined via $k(B)=\dim_F\Z(B)$. 
\end{itemize}

If $B$ is nilpotent, then by Puig's theorem, $B$ is Morita equivalent to $FD$ (see \cite[Theorem~1.30]{habil}). 
Since $FD$ is a basic algebra, $B$ even determines the isomorphism type of $FD$. So we are down to the modular isomorphism problem.

Dade has constructed non-isomorphic finite groups $G$ and $H$ such that $FG\cong FH$ (see \cite[Theorem~14.2.2]{PassmanASGR}). Both groups have the form $D\rtimes K$ for some $p'$-group $K$ (they differ only by the action of $K$ on $D$). This shows that the \emph{fusion system} of a block is in general not determined by the Morita equivalence class.

If we work over a complete discrete valuation ring $\mathcal{O}$ with $\mathcal{O}/J(\mathcal{O})\cong F$ instead, then also the \emph{decomposition matrix} $Q$ of $B$ is given by the Morita equivalence class. The \emph{heights} of the ordinary irreducible characters can be extracted from $Q$ by Brauer's theory of contributions (see \cite[Proposition~1.36]{habil}). Brauer's (still open) height zero conjecture would imply that $D$ is abelian if and only if all characters have height $0$. By work of Kessar-Malle, $D$ is non-abelian whenever there are characters of positive height (see \cite[Theorem~7.14]{habil}).
Over the valuation ring, Puig~\cite[Theorem~8.2]{PuigLocalBuch} showed that $B$ is nilpotent if and only if $B$ and $\mathcal{O}D$ are Morita equivalent. 
If so, then the isomorphism type of $D$ is uniquely determined by Roggenkamp-Scott~\cite{RoggenkampScott}. A refined version of the block modular isomorphism problem over valuation rings has been introduced by Scott~\cite{Scott}.

The paper is organized as follows: In the next section we prove some general results and give an affirmative answer to \autoref{Q1} for blocks of defect at most $3$ and $2$-blocks of defect $4$. Moreover, we provide a solution for metacyclic defect groups. In Section~3 we restrict ourselves to $p$-solvable groups $G$. 
Here we show first that principal blocks uniquely determine a group algebra up to isomorphism. Next we prove in \autoref{abeldef} that the group algebra $FG$ determines if $G$ has abelian Sylow $p$-subgroups. This appears to be an open problem for arbitrary finite groups $G$.  
As applications we solve \autoref{Q1} for $2$-blocks of defect $5$ and $3$-blocks of defect $4$ (of $p$-solvable groups). 

\section{Blocks of defect 3 and metacyclic defect groups}

Our notation is fairly standard. The Jacobson radical of a module $M$ (or a ring) is denoted by $J(M)$ and the Loewy length is $LL(M)$. The symbols $C_n^m$, $D_{2n}$, $Q_{2^n}$, $SD_{2^n}$, $S_n$, $A_n$ and $p^{1+2}_+$ represent the abelian group of type $(n,\ldots,n)$ ($m$ times), the dihedral group of order $2n$, the quaternion group of order $2^n$, the semidihedral group of order $2^n$, the symmetric group of degree $n$, the alternating group of degree $n$, and the extraspecial group of order $p^3$ and exponent $p$.

In the following we are mostly interested in the situation over $F$, but work over $\mathcal{O}$ from time to time to make things easier.
Our first result generalizes the main result of \cite{Baginski}.

\begin{Proposition}\label{coexp1}
Let $B$ be a $p$-block of $FG$ with defect group $D$ such that $|D|/\exp(D)\le p$. Then the Morita equivalence class of $B$ determines $D$ up to isomorphism.
\end{Proposition}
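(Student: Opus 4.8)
My plan is to read off the numerical invariants of $B$ recalled in the introduction, use them to confine $D$ to the very short list of $p$-groups permitted by the hypothesis, and then separate the last two surviving candidates by means of character heights.

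First I would translate the hypothesis into group theory. Writing $|D|=p^n$ and $\exp(D)=p^e$, the inequality $|D|/\exp(D)\le p$ forces $e\in\{n-1,n\}$, and since the Morita equivalence class of $B$ determines both $|D|$ and $\exp(D)$ I know which case occurs. If $e=n$ then $D$ contains an element of order $|D|$ and hence is cyclic, so $|D|$ alone gives $D\cong C_{p^n}$. I may therefore assume $e=n-1$, so that $D$ has a cyclic maximal subgroup $\langle a\rangle$ of index $p$.

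Next I would invoke the classical classification of $p$-groups with a cyclic subgroup of index $p$. For $p$ odd the only possibilities of exponent $p^{n-1}$ are the abelian group $C_{p^{n-1}}\times C_p$ and the modular group $M_{p^n}=\langle a,b\mid a^{p^{n-1}}=b^p=1,\ bab^{-1}=a^{1+p^{n-2}}\rangle$; for $p=2$ one has in addition $D_{2^n}$, $Q_{2^n}$ and $SD_{2^n}$. In the case $p=2$ the representation type of $B$ already detects whether $D$ is dihedral, quaternion or semidihedral, so those three groups are determined. In every case I am thus reduced to deciding between $A:=C_{p^{n-1}}\times C_p$ and $M:=M_{p^n}$. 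These two groups share the same order, exponent and rank, so the invariants listed so far do not separate them; the essential difference is that $A$ is abelian whereas $M$ is not.

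To finish I would pass to $\mathcal{O}$ and exploit that the multiset of heights of $\Irr(B)$ is a Morita invariant. If $D\cong A$ is abelian, then by Kessar--Malle every irreducible character of $B$ has height $0$, and Bessenrodt's theorem pins down the isomorphism type of $D$ exactly. It therefore suffices to show that every block with defect group $M$ carries an irreducible character of positive height: this is then a Morita-invariant property holding for $M$ but failing for $A$, which separates the two defect groups. The main obstacle is precisely this last point, namely establishing the relevant (converse) direction of Brauer's height zero conjecture for $M_{p^n}$. Although the conjecture is open in general, $M_{p^n}$ is metacyclic with a cyclic subgroup of index $p$, and I would extract the required positive-height character from the known description of the invariants of blocks with such defect groups.
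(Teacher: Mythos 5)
Your opening reductions are sound and coincide with the paper's: the order and exponent are Morita invariants, the case $\exp(D)=|D|$ gives a cyclic group, and otherwise $D$ has a cyclic maximal subgroup, so for $p>2$ everything comes down to separating $C_{p^{n-1}}\times C_p$ from the modular group $M_{p^n}$, and for $p=2$ one additionally has the tame groups. The genuine gap is in your final step. \autoref{coexp1} is a statement about Morita equivalence of blocks of $FG$, i.e.\ over the field $F$, whereas the multiset of character heights is only known to be an invariant of Morita equivalence over the valuation ring $\mathcal{O}$: it is extracted from the decomposition matrix via Brauer's contributions, and the decomposition matrix is not determined by an $F$-Morita equivalence. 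You cannot simply ``pass to $\mathcal{O}$'' --- an equivalence of the $F$-module categories does not lift to one over $\mathcal{O}$. The paper is deliberate about this distinction: the results that genuinely need heights or perfect isometries (\autoref{2def4} and the metacyclic theorem) are stated over $\mathcal{O}$, while \autoref{coexp1} is claimed over $F$. As written, your argument establishes only the weaker $\mathcal{O}$-version. (A smaller imprecision: tame representation type only places $D$ in the set of dihedral, semidihedral and quaternion groups; quaternion is singled out by the rank, but separating dihedral from semidihedral requires Erdmann's description of the hearts of the projective indecomposable modules, not the representation type alone.)

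To repair the argument one must separate the abelian group from the modular group using invariants visible over $F$, and this is exactly what the paper does with $l(B)$ and $k(B)=\dim_F\Z(B)$. For $p>2$ it combines Watanabe's theorem, which gives $l(B)\mid p-1$ and an explicit formula for $k(B)$ when $D\cong C_{p^{d-1}}\rtimes C_p$, with a subsection count producing a lower bound for $k(A)$ for any block $A$ with defect group $C_{p^{d-1}}\times C_p$ and $l(A)=l(B)$; a convexity argument in $l(A)$ then yields $k(A)>k(B)$, so the two blocks cannot be Morita equivalent. For $p=2$ both wild candidates force the block to be nilpotent, hence Morita equivalent to $FD$ by Puig, and commutativity of this basic algebra decides abelian versus non-abelian. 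Your height-based strategy is a reasonable proof of the $\mathcal{O}$-statement (the required direction of the height zero conjecture is indeed known for metacyclic defect groups), but it does not prove the proposition as stated.
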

\begin{proof}
We may assume that $\exp(D)=p^{d-1}$ where $d$ is the defect of $B$. Suppose first that $p=2$. If $B$ has wild representation type, then $D\cong C_{2^{d-1}}\times C_2$ with $d\ge 3$ or 
\[D\cong\langle x,y\mid x^{2^{d-1}}=y^2=1,\,yxy^{-1}=x^{1+2^{d-2}}\rangle\] 
with $d\ge 4$. In both cases $B$ is nilpotent (see \cite[Theorem~8.1]{habil}) and the defect groups can be distinguished, since one is abelian and the other is not. Now we assume that $B$ has tame representation type, i.\,e. $D$ is dihedral (including the Klein four-group), semidihedral or quaternion. Since quaternion groups have rank $1$, it suffices to consider dihedral groups and semidihedral groups (here $d\ge 4$). 
If $l(B)=1$, then $B$ is nilpotent and $D$ is determined by \cite[Theorem~1]{Baginski} for instance.
By comparing $k(B)$, we may assume that $l(B)=2$. By Erdmann~\cite{Erdmann}, it is known that in the dihedral case the hearts of the projective indecomposable modules are always uniserial or decomposable. In the semidihedral case the contrary happens.

Now let $p>2$. We need to distinguish the defect groups $C_{p^{d-1}}\times C_p$ and $C_{p^{d-1}}\rtimes C_p$ for $d\ge 3$. Suppose first that $D\cong C_{p^{d-1}}\rtimes C_p$. Then by a theorem of Watanabe, $l(B)$ divides $p-1$ and 
\[k(B)=\frac{p^{d-1}+p^{d-2}-p^{d-3}-p}{l(B)}+l(B)p\]
(see \cite[Theorems~1.33 and 8.13]{habil}). 
Let $A$ be a block with defect group $\langle x\rangle\times\langle y\rangle\cong C_{p^{d-1}}\times C_p$ and $l(A)=l(B)$. We may assume that the inertial quotient $E$ of $A$ stabilizes $\langle x\rangle$ and $\langle y\rangle$. In order to compute $k(A)$ we count subsections for $A$ (see \cite[p. 11]{habil} for a definition). Let $d_x\mid p-1$ be the order of the image of $E\to\Aut(\langle x\rangle)$. Similarly, we define $d_y$. Note that $\lcm(d_x,d_y)\mid |E|\mid d_xd_y$. There are $(p^{d-1}-1)/d_x$ non-trivial subsections of the form $(x^i,a_{x^i})$ up to conjugation. By another result of Watanabe, we have $l(a_{x^i})=l(a_x)$ for all $i\ne 0$ (see \cite[Theorem~1.39]{habil}). The block $a_x$ dominates a unique block $\overline{a_x}$ of $\C_G(x)/\langle x\rangle$ with cyclic defect group $D/\langle x\rangle\cong\langle y\rangle$ and inertial index $|E|/d_x$. Consequently, $l(a_{x^i})=l(a_x)=|E|/d_x$ by Dade's theory of blocks with cyclic defect groups (see \cite[Theorem~8.6]{habil}). 
Similarly, there are $(p-1)/d_y$ non-trivial subsections of the form $(y^i,a_y)$ with $l(a_y)=|E|/d_y$. Finally, there are $(p^{d-1}-1)(p-1)/|E|$ non-trivial subsections $(z,a_z)$ with $z\in \langle x,y\rangle\setminus(\langle x\rangle\cup\langle y\rangle)$. Here, $a_z$ has inertial index $1$ for all $z$ and it follows that $l(a_z)=1$. Now Brauer's formula implies
\begin{align*}
k(A)=l(A)+\frac{(p^{d-1}-1)|E|}{d_x^2}+\frac{(p-1)|E|}{d_y^2}+\frac{(p^{d-1}-1)(p-1)}{|E|}\ge l(B)+\frac{p^d-1}{p-1}
\end{align*}
(see \cite[Theorem~1.35]{habil}).
Note that $k(A)-k(B)$ is a concave function in $l(A)$ which assumes its minimum on $l(A)=1$ or $l(A)=p-1$. 
An easy computation yields $k(A)>k(B)$. Hence, $A$ and $B$ are not Morita equivalent.
\end{proof}

The following theorem extends \cite[Proposition~2.2]{Bessenrodtnewinv}.

\begin{Theorem}\label{defect3}
Let $B$ be a block of $FG$ with defect at most $3$. Then the Morita equivalence class of $B$ determines the defect group of $B$ up to isomorphism.
\end{Theorem}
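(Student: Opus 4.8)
The plan is to reduce the statement to \autoref{coexp1} together with the classification of $p$-groups of order at most $p^3$. First I would record that, by the invariants collected in the introduction, the Morita equivalence class of $B$ determines the order $|D|$, the exponent $\exp(D)$ and the rank $r(D)$ (and hence in particular the defect $d\le 3$). The strategy is then to show that these three numerical invariants, supplemented by \autoref{coexp1}, already pin down $D$ in every isomorphism type with $|D|\le p^3$; in particular I expect that neither representation type nor Bessenrodt's theorem on abelian defect groups will be needed once \autoref{coexp1} is in hand.

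Next I would split according to the ratio $|D|/\exp(D)$. If $|D|/\exp(D)\le p$, then \autoref{coexp1} applies directly and $D$ is determined. This already disposes of all groups of order at most $p^2$ and of all cyclic groups, and — crucially — of the only genuinely delicate coincidences among groups of order $p^3$, namely the pair $C_{p^2}\times C_p$ and $p^{1+2}_-$ for $p$ odd, and the pair $C_4\times C_2$ and $D_8$ for $p=2$; these are precisely the non-isomorphic groups of order at most $p^3$ sharing all three of $|D|$, $\exp(D)$ and $r(D)$, and each of them satisfies $|D|/\exp(D)=p$. Hence the only remaining case is $|D|=p^3$ together with $|D|/\exp(D)\ge p^2$, that is $\exp(D)=p$.

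For a group of order $p^3$ and exponent $p$ there are just two possibilities: the elementary abelian group $C_p^3$, of rank $3$, and (for $p$ odd) the extraspecial group $p^{1+2}_+$, of rank $2$. Since $r(D)$ is known, these two are separated by the rank, and in either case $D$ is uniquely determined, which finishes the case distinction. I expect the main work to lie not in \autoref{defect3} itself but in \autoref{coexp1}, which already carries the delicate pairs; the theorem then becomes essentially a bookkeeping exercise over the classification of groups of order at most $p^3$. The point that still needs care is the verification that $\exp(D)$ and $r(D)$ genuinely separate the remaining isomorphism types — in particular that a group of order $p^3$ and exponent $p$ is determined by its rank — but this uses nothing beyond the explicit list of groups of order $p^3$.
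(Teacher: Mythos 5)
Your proposal is correct and follows essentially the same route as the paper: reduce via \autoref{coexp1} to the case $|D|=p^3$ and $\exp(D)=p$, then observe that the two remaining candidates ($C_p^3$ and $p^{1+2}_+$ for $p>2$, only $C_2^3$ for $p=2$) are separated by the rank. The additional bookkeeping you include about which pairs of groups of order $p^3$ share all of $|D|$, $\exp(D)$ and $r(D)$ is accurate but not needed beyond what the paper's two-line argument already records.
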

\begin{proof}
By \autoref{coexp1}, we may assume that $B$ has a defect group $D$ of order $p^3$ and exponent $p$. If $p=2$, then $D$ is elementary abelian. If $p>2$, then there are two possible defect groups which differ by their rank. Hence, in any case $D$ is determined up to isomorphism.
\end{proof}

In order to deal with the $2$-blocks of defect $4$, we need a lemma about perfect isometries.

\begin{Lemma}\label{perfect}
Let $G=S_4\times C_2$ and $H=A_4\rtimes C_4$ where $C_4$ acts as a transposition on $A_4$. Then $\mathcal{O}G$ and $\mathcal{O}H$ are not perfectly isometric.
\end{Lemma}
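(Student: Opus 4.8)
The plan is to compare the principal $2$-blocks $B=B_0(\mathcal OG)$ and $B'=B_0(\mathcal OH)$ and to show that no signed bijection between $\Irr(B)$ and $\Irr(B')$ can satisfy Brou\'e's perfectness conditions. I would first record that in each case there is a unique $2$-block, so $B$ and $B'$ contain all ten irreducible characters, and that the two blocks agree on every coarse invariant: both have $k(B)=10$ with degrees $\{1,1,1,1,2,2,3,3,3,3\}$; both have $l(B)=2$, the simple modules being inflated from $S_3$ (a quotient of $S_4$, resp.\ of $H$, since $H/\pcore_2(H)\cong S_3$); both have $k_0=8$, $k_1=2$; and, the decomposition matrices having the same shape, both have Cartan matrix $\left(\begin{smallmatrix}8&4\\4&6\end{smallmatrix}\right)$ with elementary divisors $2,16$. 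The defect groups $D_8\times C_2$ and $(C_2\times C_2)\rtimes C_4$ are non-isomorphic (they contain $11$ and $7$ involutions, respectively) but share order $16$, exponent $4$ and rank $3$. Thus none of the invariants listed in the introduction separates the two blocks, and a genuinely finer argument is forced.

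The structural difference I would exploit is that $G$ has a rational character table, whereas $H^{\mathrm{ab}}\cong C_4$ forces $H$ to possess irreducible characters with values in $\QQ(i)$: four characters of $B'$, all of height $0$, are non-rational and fall into two complex-conjugate pairs, while all of $\Irr(B)$ is rational. Since this non-rationality lives entirely in the $2$-part, it is governed by the Galois automorphism $\sigma$ that fixes every odd-order root of unity and sends $i\mapsto-i$.

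Suppose then that $I\colon\ZZ\Irr(B)\to\ZZ\Irr(B')$ is a perfect isometry, with associated function $\mu_I$ on $G\times H$, so that $\mu_I(s,t)/|\C_G(s)|$ and $\mu_I(s,t)/|\C_H(t)|$ lie in $\mathcal O$, and $\mu_I(s,t)=0$ unless $s,t$ are simultaneously $2$-regular or simultaneously $2$-singular. A direct computation shows that applying $\sigma$ to the values of $\mu_I$ yields $\mu_{I^\sigma}$, where $I^\sigma=\sigma\circ I\circ\sigma^{-1}$ is again a perfect isometry; because $\sigma$ fixes every $\chi\in\Irr(B)$, this simplifies to $I^\sigma=\sigma\circ I$, so that $I$ and $\sigma\circ I$ are both perfect isometries $B\to B'$. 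Using that $\overline{\chi(s)}=\chi(s)\in\ZZ$ for $\chi\in\Irr(B)$, for a fixed $t\in H$ the value $\mu_I(s,t)$ is a $\ZZ$-linear combination of the numbers $\chi'(t)$, $\chi'\in\Irr(B')$. I would then take $t$ to be an element of order $4$ on which the non-rational characters of $B'$ take genuinely imaginary values, and play the integrality condition for $I$ against that for $\sigma\circ I$: their values at such $t$ differ by $(1-\sigma)$ applied to $\QQ(i)$-valued data, and both must be divisible in $\mathcal O$ by $|\C_H(t)|$, a constraint I expect no height-preserving sign pattern to meet.

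The main obstacle is exactly that every numerical block invariant coincides, so the contradiction cannot be read off from $k(B)$, $l(B)$, the heights or the Cartan matrix; it must be extracted from the perfectness conditions at $2$-singular elements, where the differing defect-group structure — and hence the presence or absence of $\QQ(i)$-values — first becomes visible. Making this incompatibility precise at the order-$4$ elements is the delicate heart of the argument; given the explicit character tables of these two groups of order $48$, it is a finite and essentially computer-checkable verification.
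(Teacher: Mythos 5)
You have correctly located the phenomenon the paper exploits: every character of $G=S_4\times C_2$ is rational, while $H=A_4\rtimes C_4$ has linear characters taking the values $\pm i$ on a generator $h$ of the $C_4$, and the contradiction must be extracted from the integrality condition (b) at pairs involving $h$ --- indeed the paper's final step is exactly that a certain combination of values of $\mu$ at $h$ equals $\epsilon(\lambda_1)\frac{1-i}{2}\notin\mathcal{O}$. However, the mechanism you propose for producing the contradiction does not work, and the step you defer as ``computer-checkable'' is the entire content of the proof. The Galois twist is vacuous: $\sigma:i\mapsto -i$ extends to a valuation-preserving automorphism of $\mathcal{O}$, so if $\mu_I(s,t)/\lvert\C_H(t)\rvert\in\mathcal{O}$ then automatically $\sigma(\mu_I(s,t))/\lvert\C_H(t)\rvert=\mu_{\sigma\circ I}(s,t)/\lvert\C_H(t)\rvert\in\mathcal{O}$, and likewise $(1-\sigma)(\mu_I(s,t))/\lvert\C_H(t)\rvert\in\mathcal{O}$; comparing $I$ with $\sigma\circ I$ therefore imposes no constraint beyond those already imposed by $I$ alone. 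So the Galois argument cannot be the source of the contradiction: one must exhibit a \emph{specific} non-integral value for a \emph{specific} candidate $I$, which requires first pinning $I$ down.

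That pinning-down is where the real work lies, and it uses the separation condition (a), which you state but never invoke. In the paper's proof, evaluating $\mu$ at the $2$-singular/$2$-regular mixed pairs $((123),(12)(34))$, $((123),h^2)$, $((123),h)$ and $((123),(123)h^2)$ forces the four linear characters of $G$ to map onto the four linear characters of $H$ with all four signs equal, and one further integrality computation at $(12)$ and $(12)z$ restricts which linear characters can pair up. Only after these reductions does the evaluation $\frac{1}{\lvert\C_H(h)\rvert}\bigl(\mu((12),h)+\mu((1234),h)\bigr)=\epsilon(\lambda_1)\frac{1-i}{2}$ yield a contradiction for \emph{every} remaining candidate. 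Without carrying out such a reduction, your claim that no height-preserving sign pattern satisfies the constraints is an unverified assertion rather than a proof; the preliminary observations (coincidence of $k$, $l$, degrees, heights and Cartan matrices) are correct and well motivated, but they only explain why a finer argument is needed, not how to complete it.
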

\begin{proof}
Recall from \cite[Section 4.D]{BroueEqui} that a perfect isometry is a bijection $I:\Irr(G)\to\Irr(H)$ with signs $\epsilon:\Irr(G)\to\{\pm1\}$ such that the map
\[\mu:G\times H\to\mathcal{O},\qquad (g,h)\to\sum_{\chi\in\Irr(G)}{\epsilon(\chi)\chi(g)I(\chi)(h)}\]
satisfies the following properties:
\begin{enumerate}[(a)]
\item if exactly one of $g$ and $h$ is $p$-regular, then $\mu(g,h)=0$;
\item $\mu(g,h)/\lvert\C_G(g)\rvert,\mu(g,h)/\lvert\C_H(h)\rvert\in\mathcal{O}$ for all $g\in G$, $h\in H$.
\end{enumerate}
Let $\Z(G)=\langle z\rangle$ and $H=A_4\rtimes\langle h\rangle$. We need the following columns of the character tables of $G$ and $H$ (here $i=\sqrt{-1}$):
\[
\begin{array}{c|ccccc}
G&1&(12)&(1234)&(123)&(12)z\\\hline
\lambda_1&1&1&1&1&1\\
\lambda_2&1&1&1&1&-1\\
\lambda_3&1&1&-1&1&-1\\
\lambda_4&1&1&-1&1&1\\
\chi_1&2&.&.&-1&.\\
\chi_2&2&.&.&-1&.\\
\psi_1&3&1&-1&.&1\\
\psi_2&3&1&-1&.&-1\\
\psi_3&3&-1&1&.&-1\\
\psi_4&3&-1&1&.&1
\end{array}
\qquad
\begin{array}{c|ccccc}
H&1&(12)(34)&h&h^2&(123)h^2\\\hline
\widetilde{\lambda}_1&1&1&1&1&1\\
\widetilde{\lambda}_2&1&1&-1&1&1\\
\widetilde{\lambda}_3&1&1&-i&-1&-1\\
\widetilde{\lambda}_4&1&1&i&-1&-1\\
\widetilde{\chi}_1&2&2&.&-2&1\\
\widetilde{\chi}_2&2&2&.&2&-1\\
\widetilde{\psi}_1&3&-1&-1&3&.\\
\widetilde{\psi}_2&3&-1&1&3&.\\
\widetilde{\psi}_3&3&-1&-i&-3&.\\
\widetilde{\psi}_4&3&-1&i&-3&.
\end{array}
\]
By \cite[Theorem~4.11]{BroueEqui}, $I$ preserves the heights of the characters. In particular, $\{\chi_1,\chi_2\}$ maps to $\{\widetilde{\chi}_1,\widetilde{\chi}_2\}$.
Now $\mu((123),(12)(34))=0=\mu((123),h^2)$ shows that $\{\lambda_1,\ldots,\lambda_4\}$ maps to $\{\widetilde{\lambda}_1,\ldots,\widetilde{\lambda}_4\}$. 
Moreover, $\mu((123),h)=0$ implies $\epsilon(I^{-1}(\widetilde{\lambda}_3))=\epsilon(I^{-1}(\widetilde{\lambda}_4))$ and $\epsilon(I^{-1}(\widetilde{\lambda}_1))=\epsilon(I^{-1}(\widetilde{\lambda}_2))$. Since $\mu((123),(123)h^2)=0$, we also have $\epsilon(\lambda_1)=\ldots=\epsilon(\lambda_4)$. % and $\epsilon(\chi_1)=\epsilon(\chi_2)$. 
From 
\[\mathcal{O}\ni\frac{1}{\lvert\C_G((12))\rvert}\bigl(\mu((12),(123)h^2)+\mu((12)z,(123)h^2)\bigr)=\frac{\epsilon(\lambda_1)}{4}\bigl(I(\lambda_1)((123)h^2)-I(\lambda_3)((123)h^2)\bigr)\] 
we obtain that $I(\{\lambda_1,\lambda_3\})\in\{\{\widetilde{\lambda}_1,\widetilde{\lambda}_2\},\{\widetilde{\lambda}_3,\widetilde{\lambda}_4\}\}$. This yields the final contradiction
\[\frac{1}{\lvert\C_H(h)\rvert}\bigl(\mu((12),h)+\mu((1234),h)\bigr)=\epsilon(\lambda_1)\frac{1-i}{2}\notin\mathcal{O}.\qedhere\]
\end{proof}

\begin{Theorem}\label{2def4}
Let $B$ be a block of $\mathcal{O}G$ with defect group $D$ of order $16$. Then the Morita equivalence class of $B$ determines $D$ up to isomorphism.
\end{Theorem}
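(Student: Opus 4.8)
The plan is to combine the invariants already known to be preserved by Morita equivalence with \autoref{coexp1} to cut the list of possible defect groups down to a handful, and then to separate the survivors one block at a time, reserving \autoref{perfect} for the single genuinely hard coincidence. There are fourteen groups of order $16$. Since $\exp(D)$, $r(D)$ and the representation type are Morita invariants, \autoref{coexp1} settles every $D$ with $|D|/\exp(D)\le 2$, the quoted result for abelian defect groups settles $C_4\times C_4$, $C_4\times C_2^2$ and $C_2^4$, and the representation type settles $D_{16}$, $SD_{16}$ and $Q_{16}$. What remains are exactly the five non-abelian groups of order $16$ and exponent $4$, namely $D_8\times C_2$, $Q_8\times C_2$, the central product $D_8\ast C_4\cong Q_8\ast C_4$, $C_4\rtimes C_4$ and $(C_2\times C_2)\rtimes C_4$; these we must separate from one another.

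First I would dispose of the nilpotent blocks: if $B$ is nilpotent, then over $\mathcal{O}$ it is Morita equivalent to $\mathcal{O}D$, and the isomorphism type of $D$ is recovered by the Roggenkamp--Scott theorem recalled in the introduction. So we may assume $B$ is not nilpotent, i.e. its inertial quotient is a non-trivial $2'$-subgroup of $\Aut(D)$. For each of the five groups I would run through the (few) admissible fusion systems and read off the block invariants $k(B)$, $l(B)$, the height distribution $k_i(B)$ and the Cartan matrix, all of which are available over $\mathcal{O}$ and are determined by the Morita equivalence class; these are known from the existing classification of $2$-blocks with defect groups of order $16$. Comparing the resulting tables separates every pair of non-isomorphic defect groups among the five, with one exception.

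The exception is the pair $D_8\times C_2$ and $(C_2\times C_2)\rtimes C_4$. On the first we have the principal block of $S_4\times C_2$, on the second the principal block of $A_4\rtimes C_4$, and these agree in every numerical invariant: both have $k(B)=10$, $l(B)=2$, height distribution $k_0(B)=8$, $k_1(B)=2$, and the same Cartan matrix, as one reads off from the character tables used in \autoref{perfect}. Here I would use that a Morita equivalence over $\mathcal{O}$ induces a perfect isometry (indeed an isotypy): since \autoref{perfect} shows that $\mathcal{O}(S_4\times C_2)$ and $\mathcal{O}(A_4\rtimes C_4)$ are not perfectly isometric, the two blocks are not Morita equivalent, so a single Morita equivalence class cannot contain blocks with both of these defect groups. (Alternatively one could invoke that the two defect groups have different rank, but the perfect isometry of \autoref{perfect} is the cleaner obstruction and is what we use.) This removes the last ambiguity and proves the theorem.

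The main obstacle is the middle step. It requires the complete list of inertial quotients, up to conjugacy in $\Aut(D)$, on each of the five defect groups together with the corresponding block invariants, which rests on the classification of $2$-blocks of defect $4$; assembling these tables and checking that no two non-isomorphic defect groups yield coinciding data is where all the work lies. The delicate point within this is precisely the $S_4\times C_2$ versus $A_4\rtimes C_4$ pair, where every numerical invariant agrees and only the finer perfect-isometry computation of \autoref{perfect} distinguishes the two defect groups.
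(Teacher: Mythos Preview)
Your overall architecture matches the paper's, but there are two genuine gaps.

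First, Bessenrodt's result does \emph{not} ``settle'' $C_4^2$, $C_4\times C_2^2$ and $C_2^4$ on its own: it only says that \emph{if} $D$ is already known to be abelian, then its isomorphism type is determined. It does not tell you that a block with abelian defect cannot be Morita equivalent to one with non-abelian defect of the same order, exponent and rank. So when you write ``what remains are exactly the five non-abelian groups'', you have silently discarded the comparison of, say, $C_4^2$ against $Q_8\times C_2$, $D_8*C_4$, $C_4\rtimes C_4$ (all rank~$2$, exponent~$4$), and of $C_4\times C_2^2$ against $D_8\times C_2$ and $M$ (all rank~$3$). The paper handles this inside each rank class by the value of $k(B)$ (e.g.\ $k(B)=8$ characterises $C_4^2$ among the rank-$2$ groups, $k(B)=16$ characterises $C_4\times C_2^2$ among the rank-$3$ groups), together with the fact that $l(B)=1$ forces nilpotency for all the candidate groups; you need some such argument.

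Second, and more seriously, your use of \autoref{perfect} is circular as stated. You argue: $\mathcal{O}[S_4\times C_2]$ and $\mathcal{O}[A_4\rtimes C_4]$ are not perfectly isometric, hence not Morita equivalent, hence ``a single Morita equivalence class cannot contain blocks with both of these defect groups''. The last implication fails: \autoref{perfect} only separates those two particular blocks, whereas you must separate \emph{every} block with defect group $D_8\times C_2$ and $l(B)=2$ from \emph{every} block with defect group $M$ and $l(B)=2$. The paper closes this by citing external results showing that any block with $D\cong M$ and $l(B)=2$ is perfectly isometric to $\mathcal{O}[A_4\rtimes C_4]$, and any block with $D\cong D_8\times C_2$ and $l(B)=2$ is perfectly isometric to $\mathcal{O}[S_4\times C_2]$; only then does \autoref{perfect} finish the job. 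Without this reduction your argument proves nothing about the general case. (Incidentally, your parenthetical alternative is also wrong: $D_8\times C_2$ and $M=(C_2\times C_2)\rtimes C_4$ both have rank~$3$.)

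A smaller point: the paper separates $Q_8\times C_2$ from $D_8*C_4$ via $LL(\Z(B))$, an invariant you do not list; you would need to check that your chosen invariants $k(B)$, $l(B)$, $k_i(B)$ and the Cartan matrix actually distinguish this pair before asserting that only one coincidence survives.
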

\begin{proof}
For most of the proof we argue over $F$. 
By \autoref{coexp1}, we may assume that $D$ has exponent $4$. 
Suppose in addition that $r(D)=2$. Then $D\in\{C_4^2,\,C_4\rtimes C_4,\,Q_8\times C_2,\,D_8*C_4\}$ where $D_8*C_4$ denotes a central product. If $l(B)=1$, then $B$ is always nilpotent by \cite[Theorems~8.1, 9.28 and 9.18]{habil}. In this case $D$ is determined by \cite[Lemma~14.2.7]{PassmanASGR}. Next let $l(B)>1$. Then $l(B)=3$ and $D\not\cong C_4\rtimes C_4$. If $k(B)=8$, then $D\cong C_4^2$. It remains to deal with the last two groups. In \cite[proof of Proposition~13]{KS2} we have computed the Loewy length $LL(\Z(B))$ of $\Z(B)$. It turns out that $LL(\Z(B))=4$ if and only if $D\cong Q_8\times C_2$.
  
Now let $r(D)=3$. Then $D\in\{C_4\times C_2^2,\,D_8\times C_2,\,M\}$ where 
\[M:=\langle x,y\mid x^4=y^2=[x,y]^2=[x,x,y]=[y,x,y]=1\rangle\cong C_2^2\rtimes C_4\] 
is minimal non-abelian. Again $B$ is nilpotent if and only if $l(B)=1$ by \cite[Theorems~9.7 and 12.4]{habil}. The defect group is then determined via \cite[Lemma~14.2.7]{PassmanASGR}. Let $l(B)>1$. We have $k(B)=16$ if and only if $D\cong C_4\times C_2^2$. We are left with the last two cases where $k(B)=10$. Here $l(B)=3$ implies $D\cong D_8\times C_2$. Thus, let $l(B)=2$. 
If $D\cong M$, then $B$ is perfectly isometric to $\mathcal{O}[A_4\rtimes C_4]$ as shown in \cite[Theorem~9]{Sambalemna3}. For the group $D\cong D_8\times C_2$ one can construct a perfect isometry between $B$ and $\mathcal{O}[S_4\times C_2]$ (see \cite[proof of Proposition~13]{KS2}, this relies on the computation of generalized decomposition numbers up to basic sets). By \autoref{perfect}, these groups are not perfectly isometric. Since Morita equivalence over $\mathcal{O}$ implies derived equivalence and derived equivalence implies the existence of a perfect isometry, the two defect groups can be distinguished.
\end{proof}

For the last argument in the proof above we need to work over $\mathcal{O}$, since it can be shown that the centers of $F[S_4\times C_2]$ and $F[A_4\rtimes C_4]$ are isomorphic. In fact there is an isomorphism preserving the Reynolds ideal (an invariant under perfect isometries, see \cite[Proposition~6.8]{BHHKM}). On the other hand, $S_4\times C_2$ and $A_4\rtimes C_4$ have different Külshammer ideals $T_1^\perp$. It has been asked in \cite[Question~6.7]{BHHKM} whether perfect isometries also preserve Külshammer ideals. In general this is not the case as can be seen already from $\mathcal{O}D_8$ and $\mathcal{O}Q_8$. 
We like to mention further that there are also non-solvable groups like $S_5\times C_2$ and $A_5\rtimes C_4$ having blocks with the same properties. These blocks are not Morita equivalent to those of $S_4\times C_2$ and $A_4\rtimes C_4$.

Over $\mathcal{O}$ we are able to prove a blockwise version of \cite{SandlingMeta} which generalizes \autoref{coexp1}.

\begin{Theorem}
Let $B$ be a block of $\mathcal{O}G$ with metacyclic defect group $D$. Then the Morita equivalence class of $B$ determines $D$ up to isomorphism.
\end{Theorem}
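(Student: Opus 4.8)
The plan is to reduce the problem to the nilpotent case wherever possible, and then to separate the finitely many surviving non-nilpotent configurations by the numerical block invariants, resorting to perfect isometries only where the invariants over $F$ coincide. The key reduction is that nilpotency is a Morita invariant over $\mathcal{O}$: by Puig's theorem a block is nilpotent exactly when it is Morita equivalent to $\mathcal{O}D$, so any block Morita equivalent to a nilpotent $B$ is again nilpotent, and then the isomorphism type of $D$ is recovered from $\mathcal{O}D$ by Roggenkamp--Scott for an arbitrary $p$-group $D$. Hence I would only have to treat non-nilpotent blocks, where the inertial quotient $E$ is nontrivial.

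First I would use that metacyclic groups have rank at most $2$. If $r(D)=1$ then $D$ is cyclic or, for $p=2$, generalized quaternion; each such family contains one group per order, and since $|D|$ is known and the tame representation type detects the quaternion case, $D$ is determined. If $D$ is abelian, the statement is Bessenrodt's. So I may assume that $D$ is non-abelian of rank $2$ and, by the previous paragraph, that $B$ is non-nilpotent. Applying \autoref{coexp1} disposes of all cases with $|D|/\exp(D)\le p$, in particular the modular $2$-groups and the groups $C_{p^{d-1}}\rtimes C_p$; and for $p=2$ the representation type disposes of the dihedral and semidihedral groups, the remaining maximal-class group (quaternion) already having rank $1$.

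For the non-abelian metacyclic groups of rank $2$ that survive these reductions, I would invoke the classification of metacyclic $p$-groups to obtain an explicit list and then match each candidate against Morita-invariant data. Here $|D|$ and $\exp(D)$ are already known, while the isomorphism type of $\Z(B)$ and the number $k(B)=\dim_F\Z(B)$ supply the remaining discrimination. Since $B$ is non-nilpotent, the inertial quotient $E\le\Out(D)$ is a nontrivial $p'$-group, which for metacyclic $D$ is tightly constrained by $\Out(D)$; I would compute $k(B)$ and $l(B)$ by counting subsections and applying Brauer's formula together with the theorems of Watanabe and Dade, exactly as in the proof of \autoref{coexp1}, and verify that the resulting values separate any two candidates of equal order and exponent independently of the choice of $E$. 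This computation must simultaneously exclude a non-metacyclic defect group $D'$ sharing the order, exponent and rank of $D$, since such a $D'$ would be forced to reproduce the same $k(B)$ and $\Z(B)$.

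The hardest part, and the reason for working over $\mathcal{O}$, is that certain pairs of metacyclic groups produce blocks with isomorphic centers over $F$, just as $F[S_4\times C_2]$ and $F[A_4\rtimes C_4]$ do in \autoref{2def4}, so that $F$-invariants alone cannot separate them. For such pairs I would use that Morita equivalence over $\mathcal{O}$ implies derived equivalence and hence the existence of a perfect isometry, and then derive a contradiction from the decomposition matrices and character heights in the manner of \autoref{perfect}. Carrying out this perfect-isometry step uniformly across the metacyclic families, rather than one sporadic pair at a time, is the main technical obstacle; the numerical separation in the preceding paragraph is comparatively routine once the inertial quotients have been enumerated.
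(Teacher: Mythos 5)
Your opening reductions coincide with the paper's: over $\mathcal{O}$ nilpotency is a Morita invariant by Puig's theorem and the nilpotent case is settled by Roggenkamp--Scott; abelian defect groups are detected via the height zero conjecture (known for metacyclic defect groups) and then handled by Bessenrodt; \autoref{coexp1} removes every metacyclic group with $|D|/\exp(D)\le p$; and the surviving non-abelian metacyclic $2$-groups yield only nilpotent or tame blocks, so one is reduced to $p>2$. Up to that point you are on the paper's track.

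The gap lies in what you defer as ``comparatively routine'' plus the admitted ``main technical obstacle'', which together constitute the entire core of the argument. For odd $p$ and a non-nilpotent block with non-abelian metacyclic defect group, the classification of such blocks (\cite[Theorem~8.8]{habil}) gives $D\cong\langle x,y\mid x^{p^m}=y^{p^n}=1,\ yxy^{-1}=x^{1+p^l}\rangle$ with $0<l<m$, states that the elementary divisors of the Cartan matrix are $p^n$ and $p^{m+n}$, and yields
\[k(B)=\Bigl(\frac{p^l+p^{l-1}-p^{2l-m-1}-1}{l(B)}+l(B)\Bigr)p^n.\]
The Cartan matrix therefore determines $m$ and $n$; from $k(B)$, $l(B)$ and $p^n$ one recovers $p^l+p^{l-1}-p^{2l-m-1}$, and since $l<m$ forces $2l-m-1<l-1$, this quantity lies strictly between $p^l$ and $p^{l+1}$ and hence determines $l$. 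This one arithmetic observation finishes the proof: no enumeration of inertial quotients, no pairwise comparison of candidates of equal order and exponent, and in particular no perfect isometries are needed. Your plan to ``verify that the resulting values separate any two candidates'' leaves exactly this separation unproved, and the perfect-isometry step you flag as the main obstacle is a detour --- the $F$-level invariants $k(B)=\dim_F\Z(B)$ and the Cartan elementary divisors already suffice once the exact formulas are in hand. (Your side concern about a non-metacyclic $D'$ of the same order, exponent and rank addresses a stronger statement than the theorem makes; as in the paper, one may restrict to blocks whose defect groups are metacyclic.)
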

\begin{proof}
Since we are working over $\mathcal{O}$, we may assume that $B$ is not nilpotent (see introduction). We may also assume that $D$ is non-abelian, because the height zero conjecture is known to hold for metacyclic defect groups (see~\cite[Corollary~8.11]{habil}). Finally by \autoref{coexp1}, we may assume that $D/\exp(D)>p$. Now it follows from \cite[Theorem~8.1]{habil} that $p>2$. By \cite[Theorem 8.8]{habil}, we have
\[D=\langle x,y\mid x^{p^m}=y^{p^n}=1,\ yxy^{-1}=x^{1+p^l}\rangle\cong C_{p^m}\rtimes C_{p^n}\]
where $0<l<m$ and
\[k(B)=\Bigl(\frac{p^l+p^{l-1}-p^{2l-m-1}-1}{l(B)}+l(B)\Bigr)p^n.\]
Moreover, \cite[Theorem~1.33]{habil} implies that the elementary divisors of the Cartan matrix of $B$ are $\lvert\C_D(E)\rvert=p^n$ and $|D|=p^{n+m}$ where $E$ denotes the inertial quotient of $B$ (see also \cite[proof of Theorem~8.8]{habil}). Hence, the Morita equivalence class of $B$ determines $m$ and $n$. It remains to determine $l$. Since $B$ determines $k(B)$, it also determines $p^l+p^{l-1}-p^{2l-m-1}$. It follows from $l<m$ that $2l-m-1<l-1$ and $p^l<p^l+p^{l-1}-p^{2l-m-1}<p^{l+1}$. In this way we obtain $l$.
\end{proof}

For later use, we collect some invariants of group algebras.

\begin{Proposition}\label{lem}
The isomorphism type of $FG$ determines the following:
\begin{enumerate}[(i)]
\item\label{a} $F[G/\pcore^p(G)]$ and $F[G/G']$ up to isomorphism;
\item\label{b} $\lvert\pcore^q(G)\rvert$ for every prime $q$;
\item\label{b2} if $G$ has a normal Sylow $p$-subgroup;
\item\label{b3} if $G/\pcore_{p'p}(G)$ is abelian;
\item\label{b4} if $G=\pcore_{p'pp'}(G)$;
\item\label{c} The number of conjugacy classes in $\{g^{p^i}:g\in G\}$ for every $i\ge 0$;
\item\label{d} The number of conjugacy classes of maximal elementary abelian $p$-subgroups of $G$ of given rank.
\end{enumerate}
\end{Proposition}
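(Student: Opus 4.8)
The plan is to verify each item as an invariant of the $F$-algebra $FG$, i.e.\ to exhibit for each a quantity preserved by any isomorphism $FG\cong FH$ that computes the stated group datum. I will freely use that $|G|=\dim_F FG$, that $k(G)=\dim_F\Z(FG)$, and that the number $l(G)$ of simple modules (the number of matrix factors of $FG/J(FG)$) are such invariants. For \ref{a} I would recover $F[G/G']$ as the largest commutative quotient of $FG$, namely the quotient by the two-sided ideal generated by the commutators $ab-ba$; since $G/G'$ is abelian this ideal is exactly $\Ker(FG\to F[G/G'])$, so the construction is intrinsic. Dually $F[G/\pcore^p(G)]$ should be the largest local quotient of $FG$: a local quotient $A$ with $A/J(A)\cong F$ forces every $p$-regular element to map to a scalar, so that up to a one-dimensional twist it factors through $F[G/\pcore^p(G)]$, and the quotient of maximal dimension is $F[G/\pcore^p(G)]$ itself. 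This already gives \ref{b} for $q=p$ via $|\pcore^p(G)|=|G|/\dim_F F[G/\pcore^p(G)]$.

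Item \ref{c} is the content of Külshammer's theory of the symmetric algebra $FG$ (the ideals appearing in the introduction through \cite{KProgress}): the Külshammer space $T_n^\perp\subseteq\Z(FG)$ has as a basis the class sums of those conjugacy classes lying in $\{g^{p^n}:g\in G\}$, so the required count equals $\dim_F T_n^\perp$, an algebra invariant. For \ref{d} I would pass to the cohomology ring $\cohom^*(G,F)=\operatorname{Ext}_{FG}^*(F,F)$. The point that makes this an invariant is that for any one-dimensional module $L$ the functor $-\otimes L$ is a self-equivalence of $FG$-modules sending the trivial module to $L$, whence $\operatorname{Ext}_{FG}^*(L,L)\cong\cohom^*(G,F)$; since an algebra isomorphism carries one-dimensional modules to one-dimensional modules, $\cohom^*(G,F)$ is determined up to graded isomorphism. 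Quillen stratification then identifies the conjugacy classes of maximal elementary abelian $p$-subgroups with the irreducible components of $\operatorname{Spec}\cohom^*(G,F)$, the rank of each subgroup being the dimension of the corresponding component.

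For \ref{b2} I would use the known equivalence that $G$ has a normal Sylow $p$-subgroup if and only if $\dim_F FG/J(FG)=|G|_{p'}$: if $\pcore_p(G)$ is a Sylow subgroup then every simple module is inflated from the $p'$-group $G/\pcore_p(G)$ and $FG/J(FG)\cong F[G/\pcore_p(G)]$, while the reverse implication is the substantial direction; both $\dim_F FG/J(FG)$ and $|G|_{p'}$ (the largest divisor of $\dim_F FG$ prime to $p$) are read off from $FG$. The remaining assertions reduce to $p$-local structure: \ref{b} for a general prime $q$ asks for the order of the largest $q$-group quotient of $G$, and \ref{b3}, \ref{b4} ask whether $G/\pcore_{p'p}(G)$ is abelian, respectively a $p'$-group, i.e.\ whether $G/\pcore_{p'}(G)$ is $p$-closed with the prescribed top quotient.

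The main obstacle lies exactly in these last reductions. For $q=p$ and for the commutative quotient the relevant quotients are canonical, but for $q\ne p$ the algebra $F[G/\pcore^q(G)]$ is merely a semisimple quotient, and not every algebra quotient of $FG$ comes from a quotient group: for example $F[C_2^2]$ is a quotient of $F[C_6]$ when $p\nmid 6$, although $C_2^2$ is not a quotient of $C_6$. Hence the difficulty is to single out which semisimple quotients are genuinely group algebras of $q$-quotients, equivalently to recover the normal subgroups $\pcore^q(G)$ and the core subgroups $\pcore_{p'}(G)$, $\pcore_{p'p}(G)$ from algebra-theoretic data alone. I expect this to require identifying the principal block as an invariant, as developed later in the paper, together with a block-theoretic reading of the $p$-local series, rather than a soft universal-property argument.
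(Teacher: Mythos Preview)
Your treatment of \eqref{a}, \eqref{b2}, \eqref{c}, and \eqref{d} is correct. Recovering $F[G/\pcore^p(G)]$ as the maximal local quotient and $F[G/G']$ as the maximal commutative quotient is a clean universal-property formulation; the paper instead normalizes to an augmentation-preserving isomorphism and then tracks the images of $p$-regular elements and of commutators explicitly, reaching the same conclusion by essentially the dual reasoning. For \eqref{b2} you invoke the Wallace-type criterion $\dim_F FG/J(FG)=|G|_{p'}$, which is valid; the paper simply cites Hanaki--Koshitani at this point. Your remark that $\cohom^*(G,F)$ is an algebra invariant because tensoring with a one-dimensional module is a self-equivalence is a detail the paper leaves implicit before invoking Quillen.

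The genuine gap is exactly where you locate it: \eqref{b} for $q\ne p$, and \eqref{b3}, \eqref{b4}. You do not prove these, and your suggested route is not how the paper settles them. The paper disposes of \eqref{b} by citing Cossey--Hawkes, who show that $FG$ determines $|\pcore^q(G)|$ for every prime $q$; their argument does not pass through the principal block. Items \eqref{b2}--\eqref{b4} are handled together by citing Hanaki--Koshitani. So your diagnosis that a naive quotient-algebra argument fails (your $C_6$ versus $C_2^2$ example is apt) is right, but the remedy is to invoke these existing theorems rather than to build new machinery; in particular, isolating the principal block would give information about $G/\pcore_{p'}(G)$ but would not by itself produce $|\pcore^q(G)|$ for primes $q\ne p$.
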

\begin{proof}
Let $\sigma:FG\to FH$ be an isomorphism of $F$-algebras and let $\nu:FH\to F$ be the augmentation map where $H$ is another finite group.
Since every $g\in G$ is a unit in $FG$, we have $\nu(\sigma(g))\ne 0$.
Hence, the $F$-linear map $\widetilde{\sigma}:FG\to FH$ given by $\widetilde{\sigma}(g):=\nu(\sigma(g))^{-1}\sigma(g)$ for $g\in G$ is also an isomorphism. Thus, after replacing $\sigma$ by $\widetilde{\sigma}$ we may assume that $\nu(\sigma(g))=1$ for $g\in G$. 

Let $g\in G$ be a $p$-regular element, and let $\phi:FH\to F[H/\pcore^p(H)]$ be the natural epimorphism. Then $\phi(\sigma(g))-1$ lies in the augmentation ideal of $F[H/\pcore^p(H)]$. Since $H/\pcore^p(H)$ is a $p$-group, $\phi(\sigma(g))-1$ is nilpotent. Thus, there exists $n\in\mathbb{N}$ such that $\phi(\sigma(g))^{p^n}-1=(\phi(\sigma(g))-1)^{p^n}=0$. It follows that $\phi(\sigma(g))$ is a $p$-element, but also a $p'$-element. Consequently, $\phi(\sigma(g))=1$. Since $\pcore^p(G)$ is generated by the $p$-regular elements, we have $x-1\in\Ker(\phi\circ\sigma)$ for every $x\in\pcore^p(G)$. 
On the other hand, the elements $x-1$ generate the kernel of the natural epimorphism $FG\to F[G/\pcore^p(G)]$ as an ideal. This shows that the map $\Phi:F[G/\pcore^p(G)]\to F[H/\pcore^p(H)]$, $g\pcore^p(G)\mapsto \phi(\sigma(g))\pcore^p(H)$ for $g\in G$ is a well-defined epimorphism. In particular, $|G/\pcore^p(G)|\ge|H/\pcore^p(H)|$. By symmetry, we also have $|H/\pcore^p(H)|\ge|G/\pcore^p(G)|$ and $\Phi$ is an isomorphism. Altogether we have shown that $FG$ determines $F[G/\pcore^p(G)]$.

Now let $g=[x,y]\in G$ be a commutator, and let $\phi:FH\to F[H/H']$ be the natural epimorphism. Since $F[H/H']$ is commutative, we have $\phi(\sigma(g))=[\phi(\sigma(x)),\phi(\sigma(y))]=1$. This shows $g-1\in\Ker(\phi\circ\sigma)$ for every $g\in G'$. As above we obtain that $FG$ determines $F[G/G']$. This finishes the proof of \eqref{a}. Part \eqref{b} was done in \cite[Theorem~1]{CosseyHawkes}. 
References for \eqref{b2}--\eqref{b4} can be found in \cite[Proposition~2.1]{HanakiKoshitani}.
The number of conjugacy classes in $\{g^{p^i}:g\in G\}$ coincides with the dimension of $i$-th Külshammer ideal (see \cite[(38)]{KProgress}). This settles \eqref{c}.
Finally, part \eqref{d} follows from work of Quillen~\cite{Quillen}. 
\end{proof}

As mentioned in the introduction, $F[G/\pcore^p(G)]\cong F[H/\pcore^p(H)]$ often implies $G/\pcore^p(G)\cong H/\pcore^p(H)$. It is an open question whether $FG$ also determines the normality of Sylow $q$-subgroups where $q\ne p$ (cf. \cite[Question~12]{Navarro03}).

Our next result extends a known fact for $p$-group algebras to certain block algebras.

\begin{Proposition}\label{normal}
Let $B$ be a block of $FG$ with normal defect group $D$. Then the Morita equivalence class of $B$ determines the dimensions of the simple modules of $B$ up to a common scalar. Moreover, the Morita equivalence class determines the order of the Jennings subgroups $J_i(D)$ where $J_1(D):=D$ and $J_n(D):=[D,J_{n-1}(D)]J_{\lceil\frac{n}{p}\rceil}(D)^p$ for $n>1$. In particular, the minimal number of generators of $D$ is determined and $LL(B)=LL(FD)$.
\end{Proposition}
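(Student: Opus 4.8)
The plan is to reduce to the structure theory of blocks with normal defect group and then combine Perron--Frobenius theory with Jennings' theorem. First I would recall that a block $B$ with normal defect group $D$ is Morita equivalent to a twisted group algebra $A=F_\alpha[D\rtimes E]$, where $E$ is the inertial quotient (a $p'$-group) and $\alpha\in\cohom^2(E,F^\times)$ (see \cite{habil}). Writing $A=\bigoplus_{e\in E}FD\,u_e$ as a crossed product with invertible $u_e$ normalizing the $E$-stable ideal $J(FD)$, one checks that $J(A)=J(FD)A$ and, inductively, $J(A)^n=\bigoplus_{e\in E}J(FD)^nu_e$. Hence
\[\dim_F J(A)^n/J(A)^{n+1}=|E|\cdot\dim_F J(FD)^n/J(FD)^{n+1}\qquad(n\ge0).\]
Since $A/J(A)\cong F_\alpha E$ is semisimple, the simple $A$-modules $S_1,\dots,S_l$ are exactly the simple $F_\alpha E$-modules; putting $e_i:=\dim_F S_i$ we have $\sum_ie_i^2=|E|$, and, as $A$ is free of rank $|D|$ over $F_\alpha E$, the projective cover $P_i=A\otimes_{F_\alpha E}S_i$ satisfies $\dim_F P_i=|D|e_i$.

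For the first assertion I would argue via Perron--Frobenius. From $\dim_F P_j=\sum_iC_{ij}\dim_FS_i$ and $\dim_F P_j=|D|\dim_FS_j$ we obtain $C\mathbf d=|D|\mathbf d$ for the symmetric Cartan matrix $C$ and the positive vector $\mathbf d=(\dim_FS_i)_i$. As $B$ is indecomposable, $C$ is an irreducible nonnegative integral matrix, so by Perron--Frobenius its spectral radius is a simple eigenvalue admitting, up to a positive scalar, a unique strictly positive eigenvector. Since $\mathbf d>0$, this forces the spectral radius to equal $|D|$ and $\mathbf d$ to be the corresponding Perron eigenvector. As the Morita equivalence class determines $C$ up to simultaneous permutation, it determines $\mathbf d$ up to a common scalar, which is the first claim.

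For the Jennings subgroups I would use that a Morita equivalence preserves the radical filtration and the composition multiplicities of projective modules, so the nonnegative integers $m^{(i)}_{n,j}$, defined as the multiplicity of $S_j$ in the $n$-th radical layer $J(B)^nP_i/J(B)^{n+1}P_i$, are determined by the Morita equivalence class (up to relabelling). Fix representatives $\hat e_1,\dots,\hat e_l$ of the simple-module dimensions, known up to a common positive scalar by the previous paragraph, and set $L_n:=\sum_{i,j}\hat e_i\hat e_j\,m^{(i)}_{n,j}$; these numbers are then determined by the Morita equivalence class. Reading off the regular module $A=\bigoplus_iP_i^{\,e_i}$ layer by layer gives $\dim_F J(A)^n/J(A)^{n+1}=\sum_{i,j}e_ie_j\,m^{(i)}_{n,j}$, so $L_n$ is proportional to $\dim_F J(A)^n/J(A)^{n+1}$, hence by the displayed identity proportional to $\dim_F J(FD)^n/J(FD)^{n+1}$, with one constant of proportionality independent of $n$. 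Since $FD$ is local, its zeroth radical layer is one-dimensional, so the constant equals $L_0$ and
\[\dim_F J(FD)^n/J(FD)^{n+1}=L_n/L_0\qquad(n\ge0)\]
is determined by the Morita equivalence class. By Jennings' theorem (see \cite{habil}) these radical layer dimensions determine the orders $|J_i(D)|$ of all Jennings subgroups of $D$. In particular, $|D/\Phi(D)|=|J_1(D)|/|J_2(D)|$ gives the minimal number of generators of $D$, while $LL(B)=LL(A)=LL(FD)$ follows from the first paragraph and the Morita invariance of the Loewy length.

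The main obstacle, and the point requiring the most care, is that radical layer dimensions are not themselves Morita invariant: passing between $B$, the twisted group algebra $A$ and its basic algebra rescales the layers by an unknown factor involving $|E|$ and the scaling of the $\dim_FS_i$. The observation that removes this ambiguity is that the zeroth radical layer of the local algebra $FD$ is one-dimensional, so that $L_0$ detects the offending constant and all layer dimensions of $FD$ become computable from Morita data. A secondary technical point is the correct invocation of the structure theorem for blocks with normal defect group (to obtain $A$ and the relation $\dim_F P_i=|D|\dim_F S_i$) and of Jennings' theorem in the final step.
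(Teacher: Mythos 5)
Your proposal is correct and follows essentially the same route as the paper: Perron--Frobenius applied to the Cartan matrix with the dimension vector as an eigenvector for the eigenvalue $|D|$, followed by the observation that the Loewy layers of $B$ are a fixed multiple of those of $FD$ (normalized by the one-dimensional top of the local algebra $FD$) and an appeal to Jennings' theorem. The only cosmetic differences are that the paper reduces to $D\in\Syl_p(G)$ and computes with the block idempotent $e\in\Z(FQ)$ rather than with the twisted group algebra $F_\alpha[D\rtimes E]$, and cites \cite[Corollary~10.14]{Navarro} for the eigenvector identity instead of deriving it from freeness over $F_\alpha E$; your handling of the normalization constant is, if anything, slightly more explicit than the paper's.
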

\begin{proof}
By a result of Külshammer, we may assume that $D$ is a Sylow $p$-subgroup of $G$ (see \cite[Theorem~1.19]{habil}).
By the Schur-Zassenhaus Theorem, we get $G=D\rtimes Q$ where $Q$ is a $p'$-group.
Then the irreducible Brauer characters of $B$ can be identified with irreducible characters of $Q$. In particular, the degree vector $v:=(\phi(1):\phi\in\IBr(B))$ consists of $p'$-numbers. It follows from \cite[Corollary~10.14]{Navarro} that $v$ is an eigenvector of the Cartan matrix $C$ of $B$ (corresponding to the eigenvalue $|D|$). Since $C$ is non-negative and indecomposable, the Perron-Frobenius theory shows that $C$ has only one positive eigenvector up to scalar multiplication (see \cite[Theorem~1.4.4]{Minc}). 
This implies the first claim. The Morita equivalence determines the decomposition of the radical layers $J(B)^i/J(B)^{i+1}$ into simple modules. Hence, by the first part of the proof, we know the dimensions of $J(B)^i/J(B)^{i+1}$ up to a common scalar. Let $e\in\Z(FG)$ be the block idempotent of $B$. Then $e\in\Z(FQ)$. It is well-known that
\begin{equation}\label{rad}
J(B)^i=J(eFGe)^i=eJ(FG)^ie=eFGeJ(FD)^i=eFQeJ(FD)^i.
\end{equation}
If $x_1,\ldots,x_n\in FQ$ is an $F$-basis of $eFQe$, and $y_1,\ldots,y_m\in FD$ is an $F$-basis of $J(FD)^i$, then the elements $x_iy_j$ form a basis of $J(B)^i$. Consequently, $\dim J(B)^i/\dim J(FD)^i$ does not depend on $i$. Hence, we have shown that the Morita equivalence class of $B$ determines the dimensions of the Loewy layers of $FD$. On the other hand, these dimensions determine the orders of the Jennings subgroups by \cite[Theorem~VIII.2.10]{Huppert2}.
The last two claims follow from $J_2(D)=\Phi(D)$ and \eqref{rad}. 
\end{proof}

\section{Blocks of $p$-solvable groups}

In the following we restrict ourselves to $p$-solvable groups $G$. Then the structure of $G$ can be vastly reduced. The following proposition is certainly well-known, but we were unable to find a reference where the condition $\pcore_{p'}(G)\le G'$ is proved. Therefore, we provide a proof for the convenience of the reader.

\begin{Proposition}\label{main}
Every $p$-block $B$ of a $p$-solvable group is Morita equivalent to a faithful block of a $p$-solvable group $G$ such that the following holds:
\begin{enumerate}[(i)]
\item the defect groups of $B$ are isomorphic to the Sylow $p$-subgroups of $G$;
\item $Z:=\pcore_{p'}(G)\le \Z(G)\cap G'$ and $Z$ is cyclic.
\end{enumerate}
\end{Proposition}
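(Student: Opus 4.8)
The plan is to reduce an arbitrary $p$-block $B$ of a $p$-solvable group to a block of a quotient-and-subgroup that has the stated normal structure, using the standard Fong--Reynolds and Fong reduction theorems, and then to verify the extra condition $Z\le G'$ by a separate commutator argument. First I would invoke the Fong--Reynolds reduction: passing to the inertial group of a block $b$ of $\pcore_{p'}(G)$ covered by $B$ replaces $B$ by a Morita equivalent block that covers a stable block of $\pcore_{p'}$, and this preserves defect groups. Next, by the first Fong reduction I may assume $\pcore_{p'}(G)$ is central in $G$, and by the theory of blocks with a central $p'$-subgroup I may assume $B$ is faithful (the block determines a central character on $\pcore_{p'}(G)$, whose kernel can be factored out without changing the Morita equivalence class, since a block of $G$ and the corresponding block of $G/\Ker$ are isomorphic). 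Because $B$ is then a block with $\pcore_{p'}(G)$ central and, by $p$-solvability and faithfulness, $\pcore_{p'p}(G)$ containing its own centralizer, one gets that the defect groups are Sylow $p$-subgroups of $G$; this gives (i). The cyclicity of $Z$ follows because a faithful block forces the central $p'$-group $Z$ to embed via a linear character into $F^\times$, so $Z$ is cyclic.

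For the containments in (ii), I would argue as follows. Centrality $Z\le\Z(G)$ is immediate from the first Fong reduction. To see $Z\le G'$, I would suppose otherwise and use the faithfulness together with the block idempotent structure. Concretely, if $Z\not\le G'$ then there is a nontrivial homomorphism $G\to Z/(Z\cap G')$ splitting off a direct central $p'$-factor $C$ of $G$; this $C$ acts by a nontrivial faithful linear character on the block, and one can factor out $C$ to obtain a Morita equivalent faithful block of the smaller group $G/C$ with the same defect groups, contradicting minimality. Thus after choosing $G$ minimal (say with $|G|$ least among all groups carrying a faithful block Morita equivalent to the original $B$ with $\pcore_{p'}(G)$ central and cyclic), the factor $Z/(Z\cap G')$ must be trivial, i.e. $Z\le G'$.

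The main obstacle I expect is precisely the proof of $Z\le G'$, which is why the authors single it out as the part lacking a clean reference. The delicate point is to show that any central $p'$-part not lying in $G'$ can genuinely be stripped off \emph{without disturbing the faithfulness or the defect groups and while staying inside the same Morita equivalence class}; one must check that factoring out such a central direct factor $C$ corresponds to passing from $B$ to an isomorphic block of $FG/FC\cong F[G/C]$, using that $C$ acts by a single linear character on the simple modules of $B$. Making the minimality/induction bookkeeping compatible with the earlier Fong reductions — so that each reduction preserves all the previously established properties — is the technical heart of the argument.

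The supporting facts I would cite are the Fong and Fong--Reynolds reductions and the elementary observation that a block on which a central $p'$-subgroup acts by a faithful linear character forces that subgroup to be cyclic and, after the reduction, to lie in the commutator subgroup. Each individual step is standard; the novelty is only in assembling them to guarantee the simultaneous conditions $\pcore_{p'}(G)\le\Z(G)\cap G'$ with cyclic $\pcore_{p'}(G)$.
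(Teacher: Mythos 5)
Your route differs from the paper's: you run the Fong--Reynolds and stable Fong reductions to reach a group with central cyclic $Z=\pcore_{p'}(G)$ and then try to force $Z\le G'$ by a minimality argument, whereas the paper first replaces $B$ by a twisted group algebra $F_\gamma L$ with $\pcore_{p'}(L)=1$ (Külshammer) and then \emph{untwists} $\gamma$ inside the group algebra of a Schur cover $\widehat{L}$, where the central subgroup $W$ satisfies $W\le\Z(\widehat{L})\cap\widehat{L}'$ by the very definition of a representation group; the containment in $G'$ then survives passage to $\widehat{L}/\Ker(B_1)$ for free, and only the cyclicity needs the faithfulness argument. Unfortunately the step you yourself flag as the heart of the matter --- stripping off the part of $Z$ outside $G'$ --- does not work as described, and this is a genuine gap rather than a bookkeeping issue.

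Two things go wrong. First, $Z\not\le G'$ does not yield a nontrivial subgroup $C\le Z$ with $C\cap G'=1$, let alone a central direct factor: if the Sylow $q$-subgroup $Z_q$ of the cyclic group $Z$ satisfies $1\ne Z_q\cap G'\ne Z_q$ (say $Z_q\cong C_{q^2}$ and $Z_q\cap G'\cong C_q$), then \emph{every} nontrivial subgroup of $Z_q$ meets $G'$ nontrivially, so there is nothing to split off and minimality of $|G|$ gives you no contradiction. Second, even when such a $C$ exists, the faithful block $B$ does not factor through $G/C$; one must first note that the canonical character $\lambda$ of $Z$ restricted to $C$ extends to a linear character $\mu\in\Hom(G,F^\times)$ precisely because $C\cap G'=1$, twist $B$ by $\mu^{-1}$ (an algebra automorphism of $FG$ permuting blocks and preserving defect groups), and only then pass to $G/C$ --- you allude to ``a single linear character'' but never perform this twist. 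The first problem is the fatal one: to see that a minimal $G$ really satisfies $Z\le G'$ one has to know that the class $\lambda_*[\epsilon]\in\cohom^2(G/Z,F^\times)$ attached to the extension has order exactly $|Z\cap G'|$ (inflation--restriction) \emph{and} that a class of that order is realized by a central extension of that size whose central subgroup lies in the derived subgroup --- which is exactly the Schur cover construction the paper carries out and which your argument was meant to bypass. As written, the proof of (ii) is incomplete.
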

\begin{proof}
By Külshammer~\cite{Kpsolv}, $B$ is Morita equivalent to a twisted group algebra $F_\gamma L$ where $L$ is a $p$-solvable group such that
\begin{itemize}
\item the defect groups of $B$ are isomorphic to the Sylow $p$-subgroups of $L$;
\item $\pcore_{p'}(L)=1$.
\end{itemize}
If $\gamma=1$, then the claim follows with $G=L$. Thus, we may assume that $\gamma\ne 1$. It is well-known that $\cohom^2(L,F^\times)=\pcore_{p'}(\cohom^2(L,\mathbb{C}^\times))$ (see \cite[Propsition~2.1.14]{Karpilovsky}). Hence, there exists a Schur cover $\widehat{L}$ of $L$ such that $\cohom^2(L,F^\times)\cong W\le\Z(\widehat{L})\cap \widehat{L}'$ and $\widehat{L}/W\cong L$. Choose preimages $\widehat{x}\in\widehat{L}$ for $x\in L$ such that $\widehat{1}=1$. By Maschke's Theorem, $FW=\bigoplus_{\chi\in\Irr(W)}{Fe_\chi}$ for pairwise orthogonal idempotents $e_\chi$. Since $W\le\Z(\widehat{L})$, we have $e_\chi\in\Z(F\widehat{L})$ and
\[F\widehat{L}=\bigoplus_{x\in L}{\widehat{x}FW}=\bigoplus_{\chi\in\Irr(W)}\bigoplus_{x\in L}F\widehat{x}e_\chi.\] 
Let $\alpha\in\Z^2(L,W)$ be the cocycle defined by $\widehat{x}\widehat{y}=\alpha(x,y)\widehat{xy}$ for $x,y\in L$. Then $\widehat{x}e_\chi\cdot\widehat{y}e_\chi=\widehat{xy}\alpha(x,y)e_\chi$. 
Every element $w\in W$ can be written in the form $w=\sum_{\chi\in\Irr(W)}\chi(w)e_\chi$. In particular, $\alpha(x,y)e_\chi=\chi(\alpha(x,y))e_\chi$.
It follows that $\bigoplus_{x\in L}F\widehat{x}e_\chi$ is isomorphic to the twisted group algebra $F_{\Gamma(\chi)}L$ where $\Gamma:\Irr(W)\to\cohom^2(L,F^\times)$, $\chi\mapsto\chi\circ\alpha$. 

We show that $\Gamma$ is surjective. Since $\lvert\Irr(W)\rvert=|W|=\lvert\cohom^2(L,F^\times)\rvert$, it suffices to show that $\Gamma$ is injective. Let $\chi\in\Ker(\Gamma)$. Then there exists a map $\rho:L\to F^\times$ such that $\chi\circ\alpha=\partial\rho$. We have $\rho(1)=\partial\rho(1,1)=\chi(\alpha(1,1))=\chi(1)=1$. Let $\widehat{\chi}:\widehat{L}\to F^\times$ with $\widehat{\chi}(\widehat{x}w):=\rho(x)\chi(w)$ for $x\in L$ and $w\in W$. Obviously, $\widehat{\chi}$ extends $\chi$. For $x,y\in L$ and $w,z\in W$ we have
\begin{align*}
\widehat{\chi}(\widehat{x}w\cdot\widehat{y}z)&=\widehat{\chi}(\widehat{xy}\alpha(x,y)wz)=\rho(xy)\chi(\alpha(x,y))\chi(w)\chi(z)=\rho(xy)\rho(x)\rho(y)\rho(xy)^{-1}\chi(w)\chi(z)\\
&=\rho(x)\chi(w)\rho(y)\chi(z)=\widehat{\chi}(\widehat{x}w)\widehat{\chi}(\widehat{y}z).
\end{align*}
Hence, $\widehat{\chi}$ is a homomorphism and $\widehat{L}/\Ker(\widehat{\chi})\le F^\times$ is abelian. Consequently, $W\le\widehat{L}'\le\Ker(\widehat{\chi})$ and $\chi=1$. 

Therefore, we have shown that $\Gamma$ is surjective and $F_\gamma L$ is isomorphic to a direct summand $B_1$ of $F\widehat{L}$ (as an ideal). Since, $\Z(B)\cong\Z(F_\gamma L)$ is a local algebra, $B_1$ is in fact a block of $F\widehat{L}$. Let $K:=\Ker(B_1)\le\pcore_{p'}(\widehat{L})=W$. Then $B_1$ is isomorphic to a faithful block $A$ of $G:=\widehat{L}/K$ (see \cite[Theorem~1.24]{habil}). For $\chi\in\Irr(A)$ we have $\pcore_{p'}(\Ker(\chi))=1$ (see \cite[Theoren~6.10]{Navarro}). In particular, the restriction $\chi_{W/K}$ is faithful. This implies that $Z:=\pcore_{p'}(G)\cong W/K$ is cyclic. The remaining properties follow easily.
\end{proof}

Let $B$ be a block of a group $G$ as in \autoref{main}. 
Let $P:=\pcore_p(G)$. Then $P\cong PZ/Z=\pcore_p(G/Z)$ and the Hall-Higman Lemma implies that $\C_G(P)=\Z(P)\times Z$. 
It follows that $G/PZ\le\Out(P)$ and $|G/Z|$ is bounded in terms of $|P|$ and therefore in terms of the defect of $B$. 
As a $p$-solvable group, $G$ has a $p$-complement $K\le G$. % (see \cite[Theorem~3.20]{IsaacsGroup}). 
Then 
\[Z\le\cohom^2(G/Z,F^\times)\cong\cohom^2(K/Z,F^\times)\cong\pcore_{p'}(\cohom^2(K/Z,\CC^\times))\]
by \cite[Corollary~2.1.11]{Karpilovsky}. Hence, $|G|$ is bounded in terms of the defect of $B$. 

There are two important special cases of \autoref{main} which arise frequently:
\begin{itemize}
\item $B$ has a normal defect group, i.\,e. $P:=\pcore_p(G)\in\Syl_p(G)$. This happens for example whenever $B$ is a controlled block (for instance if the defect groups are abelian). We will apply \autoref{normal} in this situation.

\item $B$ is the principal block, i.\,e. $Z=1$ and $B=FG$. 
We will show in \autoref{dims} that the Morita equivalence class of $B$ determines $FG$ up to isomorphism. 
Here we are in a position to use \autoref{lem}.
\end{itemize}

Külshammer's paper~\cite{Kpsolv} provides not just any Morita equivalence, but an algebra isomorphism from $B$ to a matrix algebra over a twisted group algebra. This indicates our next result. 

\begin{Proposition}\label{dims}
Let $B$ be a block of $FG$ where $G$ is $p$-solvable. Then the Morita equivalence class of $B$ determines the dimensions of the simple modules of $B$ up to a common scalar. In particular, Morita equivalent principal blocks of $p$-solvable groups are isomorphic.
\end{Proposition}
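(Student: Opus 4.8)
The plan is to run everything through Külshammer's structure theorem \cite{Kpsolv}, which provides not merely a Morita equivalence but an $F$-algebra isomorphism $B\cong M_n(F_\gamma L)$ for a $p$-solvable group $L$ and a cocycle $\gamma$. Since the functor $V\mapsto e_{11}V$ identifies $M_n(F_\gamma L)$-modules with $F_\gamma L$-modules, every simple $F_\gamma L$-module $S$ yields a simple $B$-module of dimension $n\dim_F S$; thus the dimension vector of $B$ is $n$ times that of $F_\gamma L$, and it suffices to show that the simple-module dimensions of the twisted group algebra $F_\gamma L$ are determined up to a common scalar by the Morita equivalence class, the unknown factor $n$ being exactly the scalar I am allowed to lose. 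Concretely I read the assertion as follows: if $B'$ is any block of a $p$-solvable group with $B'\sim B$, then, writing $B'\cong M_{n'}(F_{\gamma'}L')$ as well and using transitivity of Morita equivalence, I must match the simple dimensions of the two twisted group algebras $F_\gamma L$ and $F_{\gamma'}L'$.

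To analyse $F_\gamma L$ I would realise it, as in the proof of \autoref{main}, as a faithful block $A$ of $F\widehat L$ for a Schur cover $\widehat L$ with $\pcore_{p'}(\widehat L)$ cyclic and central; then the simple $F_\gamma L$-modules become the irreducible Brauer characters of $A$ and their dimensions are the degrees $\phi(1)$. When the defect group is normal in $\widehat L$ this is already \autoref{normal}: the degree vector is a positive eigenvector of the Morita-invariant Cartan matrix $C$ for its Perron--Frobenius root $|D|$, and indecomposability of $C$ forces this eigenvector to be unique up to scaling. The hard part is the general case, where the defect group need not be normal and the degree vector is typically \emph{not} an eigenvector of $C$ at all (already for the principal $2$-block of $S_4$ the unique positive eigenvector of $C$ is irrational, while the simple dimensions are $1$ and $2$). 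Here $C$ alone cannot recover the dimensions, and I would instead exploit the finer rigidity of $F_\gamma L$ as a twisted group algebra of a $p$-solvable group, arguing by induction along a chief series and using Clifford theory relative to $\pcore_p(L)$ (on which the cocycle is trivial) to transport the simple dimensions through the equivalence. This inductive comparison, equivalently a suitable uniqueness of Külshammer's reduction, is the main obstacle.

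Finally, for the statement about principal blocks the cocycle is trivial, so Külshammer's reduction keeps $B$ a principal block and $F_\gamma L=FL$ is an honest group algebra. Given Morita equivalent principal blocks $B_0(FG)\sim B_0(FH)$ of $p$-solvable groups, the first part yields that their simple-dimension vectors agree up to a common positive scalar $c$. Both blocks contain the trivial module, of dimension $1$, which is the smallest possible simple dimension; comparing the minimal entries forces $c=1$, so under the bijection of simple modules coming from the equivalence the dimension vectors coincide exactly. Over the algebraically closed field $F$ an algebra is determined up to isomorphism by its basic algebra $\Lambda$ together with the dimensions $d_i$ of its simple modules, namely $B_0(FG)\cong\End_\Lambda\bigl(\bigoplus_i P_i^{d_i}\bigr)$ with $P_i$ the projective covers of the simple $\Lambda$-modules; since Morita equivalent blocks share $\Lambda$ and we have just matched the $d_i$, the two principal blocks are isomorphic.
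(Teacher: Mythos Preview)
Your reduction to Külshammer's model and the treatment of the second claim (principal blocks) are fine and match the paper's argument; the gap is precisely where you say it is. After passing to a faithful block $A$ of $\widehat L$ the defect group is a Sylow $p$-subgroup but is in general \emph{not} normal (cf.\ the remarks after \autoref{main}), so \autoref{normal} does not apply, and your proposed ``induction along a chief series via Clifford theory'' is only a sketch of a strategy, not an argument. Indeed, this is the heart of the proposition: Morita equivalence of $F_\gamma L$ and $F_{\gamma'}L'$ does not a priori give any control over simple dimensions, so you have merely restated the problem for the reduced algebras.

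The paper resolves this with a different idea that works directly from the Cartan matrix $C$ and uses $p$-solvability in an essential way through the Fong--Swan Theorem. Since every $\phi\in\IBr(B)$ lifts to some $\widehat\phi\in\Irr(B)$, one may take the first $l(B)$ rows of the decomposition matrix to be the identity; then the entries $p^dc'_{\phi\mu}$ of $p^dC^{-1}$ are exactly the contribution numbers $m^1_{\widehat\phi\,\widehat\mu}$. By Brauer's theory these read off the heights of the $\widehat\phi$, hence of the $\phi$, so the $p$-parts $\phi(1)_p$ are determined (up to a common scalar) by $C$ alone. For the $p'$-parts one considers the matrix $A$ obtained from $C$ by scaling each column by the relevant height factor $(p^dc'_{\mu\lambda})_p$; by \cite[Corollary~10.14]{Navarro} the vector $v=(\phi(1)_{p'})$ satisfies $Av=p^dv$, and since $A$ is still non-negative and indecomposable, Perron--Frobenius pins $v$ down up to scalar. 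Your $S_4$ example shows exactly why this column rescaling is needed: the raw Cartan matrix has an irrational Perron eigenvector, but once the height-$1$ column is doubled the eigenvector becomes $(1,1)$, recovering the $p'$-parts. This argument bypasses the Külshammer reduction entirely and supplies the step your proposal left open.
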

\begin{proof}
We first determine the heights of the irreducible Brauer characters of $B$.
By the Fong-Swan Theorem, every irreducible Brauer character $\phi\in\IBr(B)$ lifts to $\widehat{\phi}\in\Irr(B)$ (see \cite[Theorem~10.1]{Navarro}). Hence, we may assume that the first $l(B)$ rows of the decomposition matrix of $B$ form an identity matrix. Let $C=(c_{\phi\mu})_{\phi,\mu\in\IBr(B)}$ be the Cartan matrix of $B$, and let $C^{-1}=(c'_{\phi\mu})$. Then Brauer's contribution numbers are given by $m_{\widehat{\phi}\widehat{\mu}}^1=p^dc'_{\phi\mu}\in\ZZ$ where $d$ is the defect of $B$ (see \cite[p. 14]{habil}). By \cite[Proposition~1.36(i)]{habil}, the Brauer characters $\phi$ of height $0$ are characterized by $p^dc'_{\phi\phi}\not\equiv 0\pmod{p}$. There is always at least one such character, say $\lambda\in\IBr(B)$. It follows from \cite[Proposition~1.36(ii)]{habil} that the height of any $\phi\in\IBr(B)$ equals the $p$-adic valuation of $p^dc'_{\phi\lambda}$. 

Now we consider the matrix $A:=(c_{\phi\mu}(p^dc'_{\mu\lambda})_p)_{\phi,\mu\in\IBr(B)}$. Let $v:=(\phi(1)_{p'}:\phi\in\IBr(B))$. Then \cite[Corollary~10.14]{Navarro} shows that $Av=p^dv$. 
Since $A$ is obtained from $C$ by scalar multiplication of columns, $A$ is a non-negative, indecomposable integer matrix. The Perron-Frobenius theory therefore implies that $A$ has only one positive eigenvector up to scalar multiplication (see \cite[Theorem~1.4.4]{Minc}). Consequently, the Morita equivalence class of $B$ determines $v$ up to a scalar. Since we also know the $p$-parts $\phi(1)_p$ from the heights up to a scalar, the first claim follows. 

For the second claim suppose that $A$ and $B$ are Morita equivalent principal blocks of $p$-solvable groups. Since $A$ and $B$ contain the $1$-dimensional trivial module, the dimensions of all simple $A$-modules and $B$-modules coincide by the first part of the proof. We denote these dimensions by $d=(d_1,\ldots,d_n)$. Let $P_1,\ldots,P_n$ be a set of representatives for the projective indecomposable $B$-modules up to isomorphism. Then $P_1^{d_1}\oplus\ldots\oplus P_n^{d_n}$ is isomorphic to the regular $B$-module.
The Morita equivalence between $A$ and $B$ induces an isomorphism between $A$ and the endomorphism algebra $\End_B(P_1^{s_1}\oplus\ldots\oplus P_n^{s_n})$ for some $s_1,\ldots,s_n\ge 1$. 
Let $C$ be the Cartan matrix of $A$ and $B$. Then the dimensions of the projective indecomposable modules of $A$ are given by $Cd$, but also by $Cs$ with $s=(s_1,\ldots,s_n)$. Since $C$ is invertible, it follows that $d=s$ and \[A\cong\End_B(P_1^{d_1}\oplus\ldots\oplus P_n^{d_n})\cong\End_B(B)\cong B.\qedhere\]
\end{proof}

In general, the dimensions of the simple modules of Morita equivalent blocks are not proportional. 
For example, by Scopes reduction, the principal $5$-blocks of $S_5$ and $S_6$ are Morita equivalent, but the dimensions are $1$, $1$, $3$, $3$ and $1$, $1$, $8$, $8$.

\begin{Lemma}\label{fixed}
Let $G=P\rtimes Q$ where $P$ is a $p$-group and $Q$ is a $p'$-group acting faithfully on $P$. Then the isomorphism type of $FG$ determines the fixed point algebras $FP^Q$ and $F\Z(P)^Q$ up to isomorphism. 
\end{Lemma}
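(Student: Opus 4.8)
The plan is to realize both algebras as pieces of $FG$ that are visibly invariant under algebra isomorphism. Since $G=P\rtimes Q$, I would first write $FG=FP\ast Q$ as a skew group algebra, with $Q$ acting on $FP$ by conjugation. As $|Q|$ is invertible in $F$, set $e:=|Q|^{-1}\sum_{q\in Q}q\in FQ\subseteq FG$, the idempotent of $FQ$ affording the trivial character. A direct computation with the twisted multiplication gives $e(FP\ast Q)e\cong FP^Q$, the isomorphism sending $eae$ (for $a\in FP$) to the average $|Q|^{-1}\sum_{q}{}^{q}a\in FP^Q$. The same computation with $e_\lambda:=|Q|^{-1}\sum_q\lambda(q)^{-1}q$ attached to any linear character $\lambda$ of $Q$ yields $e_\lambda FG e_\lambda\cong FP^Q$ as well; equivalently, $FP^Q\cong\End_{FG}(FGe_\lambda)^{\mathrm{op}}$, where $FGe_\lambda$ is the projective cover of the one-dimensional simple module $S_\lambda$.

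To see that this extraction is intrinsic, I would use that every simple $FG$-module has $P=\pcore_p(G)$ in its kernel, so the simple modules are the inflations of the simple $FQ$-modules, and the one-dimensional ones are exactly the linear characters of $Q$. Since $F$ is algebraically closed, these are detected as the $1\times1$ matrix factors of $FG/J(FG)$, so their number and their projective covers are read off the isomorphism type of $FG$. If $f$ is any primitive idempotent with one-dimensional head, then $FGf\cong FGe_\lambda$ for the corresponding $\lambda$, whence $fFGf\cong e_\lambda FG e_\lambda\cong FP^Q$; as a one-dimensional simple module always exists (the trivial one), this determines $FP^Q$ up to isomorphism.

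For $F\Z(P)^Q$ the plan is to pass to the (intrinsic) center and use the Brauer homomorphism. Because $Q$ is a faithful $p'$-group, no nontrivial element of $Q$ induces an inner automorphism of $P$ (the inner automorphisms form a $p$-group), so $\C_G(P)=\Z(P)$. Since $P$ is the normal Sylow $p$-subgroup, every block of $FG$ has defect group $P$; applying $\operatorname{Br}_P$ to the block idempotents, which are orthogonal and land in the local algebra $F\Z(P)^Q$, forces $FG$ to be a single block, so $\Z(FG)$ is a local commutative $F$-algebra. The Brauer homomorphism $\operatorname{Br}_P\colon\Z(FG)\to F\C_G(P)^{\N_G(P)}=F\Z(P)^Q$ (projection of a class sum onto its part supported on $\C_G(P)=\Z(P)$) is a surjective $F$-algebra map: a $Q$-orbit on $\Z(P)$ is precisely a $G$-class contained in $\Z(P)$, and its class sum is its own $\operatorname{Br}_P$-image. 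Hence $F\Z(P)^Q\cong\Z(FG)/\Ker\operatorname{Br}_P$, where $\Ker\operatorname{Br}_P$ is spanned by the class sums $\widehat{C}$ with $C\cap\Z(P)=\emptyset$; these are exactly the class sums of augmentation zero, since $p\mid|C|$ holds if and only if $C\not\subseteq\Z(P)$.

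The hard part is that $\operatorname{Br}_P$ is defined through the group basis, so I must show that the quotient $\Z(FG)/\Ker\operatorname{Br}_P$ depends only on the isomorphism type of $\Z(FG)$ (and hence of $FG$), without first recovering $P$ itself. Locality helps frame this — every relevant quotient is a local algebra with residue field $F$, and $F\Z(P)^Q$ equals the subalgebra of $\Z(FG)$ spanned by the class sums that are units (those of nonzero augmentation), which is a complement to $\Ker\operatorname{Br}_P$. The delicate point is to characterize this kernel among the ideals of $\Z(FG)$ intrinsically; my intended route is to invoke Külshammer's structural description of a block with normal defect group (via \cite{Kpsolv} and the reduction behind \autoref{main}), which presents $FG$, and therefore $\Z(FG)$ together with $\operatorname{Br}_P$, explicitly in terms of the pair $(P,Q)$ and a cohomology class, thereby exhibiting the Brauer quotient as a function of data preserved by any algebra isomorphism. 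Establishing this invariance of $\Ker\operatorname{Br}_P$ is where the real work lies.
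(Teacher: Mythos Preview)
Your treatment of $FP^Q$ is essentially the same as the paper's: both use the idempotent $e=|Q|^{-1}\sum_{q\in Q}q$, verify $eFGe\cong FP^Q$ via orbit averaging, and identify $e$ intrinsically as a primitive idempotent whose head is one-dimensional (the paper phrases this as ``the only primitive idempotent with augmentation $1$ up to conjugation'', which amounts to the same thing after normalizing the isomorphism to preserve augmentation).

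For $F\Z(P)^Q$, however, there is a genuine gap. You correctly compute that $\Ker\operatorname{Br}_P$ (restricted to $\Z(FG)$) is the span of the class sums $\widehat{C}$ with $p\mid|C|$, and you correctly recognize that the whole problem is to show this subspace is intrinsic. But your proposed route---invoking K\"ulshammer's structural description of blocks with normal defect group---is circular: that description presents $FG$ in terms of $(P,Q,\gamma)$, whereas the task is precisely to extract invariants of $(P,Q)$ from the bare algebra $FG$. Nothing in that description singles out $\Ker\operatorname{Br}_P$ among the ideals of $\Z(FG)$ without already knowing the group basis.

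The paper's missing ingredient is short and decisive: let $\K(FG)$ be the $F$-span of all commutators $xy-yx$ in $FG$, an object defined purely from the algebra structure. A standard computation shows
\[
\K(FG)=\Bigl\{\sum_{g\in G}\alpha_g g : \sum_{g\in C}\alpha_g=0\text{ for every conjugacy class }C\Bigr\},
\]
so a class sum $\widehat{C}$ lies in $\K(FG)$ if and only if $|C|\equiv 0\pmod p$. Hence $\Z(FG)\cap\K(FG)$ is exactly your $\Ker\operatorname{Br}_P$, and the Brauer homomorphism induces
\[
\Z(FG)\big/\bigl(\Z(FG)\cap\K(FG)\bigr)\;\cong\;F\Z(P)^Q.
\]
The left-hand side is manifestly determined by the isomorphism type of $FG$. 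This one line replaces your entire final paragraph; you had already done the work of showing that $p\mid|C|$ if and only if $C\not\subseteq\Z(P)$ (using faithfulness of the $Q$-action), so all that was missing was the identification of the kernel with the intrinsic subspace $\Z(FG)\cap\K(FG)$.
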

\begin{proof}
If $FG$ is isomorphic to another group algebra $FH$, then we have seen in the proof of \autoref{lem} that there is an isomorphism preserving the augmentation. Let $e:=\frac{1}{|Q|}\sum_{a\in Q}a\in FG$. Then $e$ is an idempotent and $\dim FGe=FPe=|P|$. In particular, $e$ is primitive. Moreover, $e$ is the only primitive idempotent with augmentation $1$ up to conjugation. Consequently, $FG$ determines the algebra $eFGe=eFPe$ (this is the endomorphism ring of the trivial projective indecomposable module). 
For $x\in P$ we have
\[exe=\frac{1}{|Q|^2}\sum_{a,b\in Q}ab(b^{-1}xb)=\frac{\lvert\C_Q(x)\rvert}{|Q|}e\widetilde{x}\] 
where $\widetilde{x}:=\frac{1}{\lvert\C_Q(x)\rvert}\sum_{a\in Q}{axa^{-1}}\in FP^Q$. 
Let $x_1,\ldots,x_n\in P$ be a set of representatives for the $Q$-orbits on $P$. Then the elements $e\widetilde{x_1},\ldots,e\widetilde{x_n}$ form a basis of $eFPe$. 
In particular, $\dim eFPe=n=\dim FP^Q$. Since $ey=ye$ for $y\in FP^Q$, it follows that the map $FP^Q\to eFPe$, $y\mapsto ey$ is an isomorphism of $F$-algebras.

Now let $\K(FG)$ be the $F$-subspace of $FG$ spanned by the commutators $xy-yx$ for $x,y\in FG$. 
Obviously, $FG$ determines $\K(FG)$ and $\K(FG)$ is already spanned by the elements $gh-hg=gh-h(gh)h^{-1}$ for $g,h\in G$. This shows that
\[\K(FG)=\biggl\{\sum_{g\in G}\alpha_gg\in FG:\sum_{g\in C}\alpha_g=0\text{ for every conjugacy class $C$ of $G$}\biggr\}\]
(see \cite[(2)]{KProgress}).
Recall that the center $\Z(FG)$ is generated by the class sums of $G$ (as an $F$-space). Hence, $\Z(FG)\cap\K(FG)$ is generated by the class sums whose size is divisible by $p$. Since $Q$ acts faithfully, these are precisely the class sums not lying in $F\Z(P)$. Hence, the Brauer homomorphism with respect to $P$ yields an isomorphism 
\[\Z(FG)/\Z(FG)\cap\K(FG)\to F\Z(P)^Q.\qedhere\]
\end{proof}

In the situation of \autoref{fixed} the elementary divisors of the Cartan matrix of $FG$ are given by \[\lvert\C_P(x_1)\rvert,\ldots,\lvert\C_P(x_n)\rvert\] where $x_1,\ldots,x_n\in Q$ represent the conjugacy classes of $Q$. These numbers contain further information on the action of $Q$ on $P$. However, the action of $Q$ on $P$ is not uniquely determined by $FG$ as can be seen by Dade's example mentioned in the introduction.

According to \cite[p. 14]{Bessenrodtnewinv} it is an open question whether the group algebra $FG$ determines the commutativity of the Sylow $p$-subgroups of $G$. We give an affirmative answer for $p$-solvable groups.

\begin{Theorem}\label{abeldef}
Let $G$ be a $p$-solvable group with Sylow $p$-subgroup $P$. Then the isomorphism type of $FG$ determines if $P$ is abelian. If so, $FG$ also determines the isomorphism type of $P$.
\end{Theorem}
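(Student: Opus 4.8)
The plan is to reduce both assertions to a statement about the principal block and its Morita equivalence class. As in the proof of \autoref{lem} I may assume that the isomorphism $FG\to FH$ preserves augmentations, so that it identifies trivial modules and hence principal blocks. Therefore the isomorphism type of $FG$ determines its principal block $B_0$ as an $F$-algebra, and with it the Morita equivalence class of $B_0$, its centre $\Z(B_0)$, its Cartan matrix, and --- by \autoref{dims}, the dimension $1$ of the trivial module pinning down the common scalar --- the exact dimensions of the simple $B_0$-modules. A defect group of $B_0$ is a Sylow $p$-subgroup $P$ of $G$, so it suffices to decide from these data whether $P$ is abelian and, if so, to recover $P$. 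For this I would exploit the reduction of \autoref{main} together with \autoref{normal}.

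I would treat the reconstruction of $P$ under the assumption that $P$ is abelian first, since it is the easier half. In that case $B_0$ is a controlled block, so the reduction of \autoref{main} produces a block with \emph{normal} defect group isomorphic to $P$, and \autoref{normal} applies to it. Hence the Morita class determines the orders $|J_i(P)|$ of all Jennings subgroups, i.e. the dimensions of the Loewy layers of $FP$. For an abelian $p$-group these orders are exactly the orders $|P^{p^k}|$, and the sequence $(|P^{p^k}|)_{k\ge0}$ recovers the type of $P$; thus the abelian group $P$ is determined up to isomorphism.

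The genuinely hard part is to detect abelianness of $P$ from the $F$-algebra $FG$. My plan is to route this through the height zero conjecture, which is a theorem for $p$-solvable groups by Gluck--Wolf: as the defect group of $B_0$ is a full Sylow subgroup, $P$ is abelian if and only if every $\chi\in\Irr(B_0)$ has height zero, equivalently $k_0(B_0)=k(B_0)$. The value $k(B_0)=\dim_F\Z(B_0)$ is at hand, so everything comes down to recovering the number $k_0(B_0)$ of height zero characters from $FG$. The obstacle is that heights are a priori invariants over $\mathcal{O}$; I would try to extract $k_0(B_0)$ over $F$ from the Cartan matrix (which by Fong--Swan already yields, as in \autoref{dims}, the heights of the $l(B_0)$ lifted irreducible Brauer characters), from the Külshammer ideals of \autoref{lem}\eqref{c}, and from $\Z(B_0)$.

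The main obstacle deserves to be stated plainly, because a naive attempt fails: the dimensions of the simple $B_0$-modules do \emph{not} determine abelianness. For $p=2$ the $2$-solvable group $\SL_2(3)=Q_8\rtimes C_3$ has the non-abelian Sylow subgroup $Q_8$, yet its unique block has only one-dimensional simple modules, so any criterion formulated solely in terms of Brauer character degrees would falsely report $P$ abelian. What saves the situation is that $\SL_2(3)$ nevertheless has ordinary characters of positive height (the three of degree $2$). Hence the decisive and most laborious step will be to prove that the $F$-algebra $FG$ detects the presence of a character of positive height in $B_0$; I expect this to require the Gluck--Wolf theorem together with a careful extraction of height information over $F$ via the Cartan matrix, the Külshammer ideals and the centre.
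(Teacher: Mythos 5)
Your reduction to the principal block via an augmentation-preserving isomorphism matches the paper's first step, and your recovery of the isomorphism type of an abelian $P$ (via \autoref{main}, \autoref{normal} and the Jennings series, rather than the paper's citation of Bessenrodt) is a workable variant. But the heart of the theorem --- deciding from $FG$ whether $P$ is abelian --- is not proved in your proposal; you only describe a hoped-for route and yourself flag it as ``the decisive and most laborious step.'' That route has a genuine obstruction: it requires reading off $k_0(B_0)$, equivalently the presence of a character of positive height, from the $F$-algebra. Heights are extracted from the decomposition matrix, which is an invariant over $\mathcal{O}$, not over $F$; the Cartan matrix only yields (as in \autoref{dims}) the heights of the $l(B_0)$ Brauer characters, not of all $k(B_0)$ ordinary characters, and neither $\Z(B_0)$ nor the K\"ulshammer ideals are known to determine $k_0(B_0)$. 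Indeed the paper explicitly works over $\mathcal{O}$ whenever it needs height or perfect-isometry information (\autoref{perfect}, \autoref{2def4}), precisely because such data is not available over $F$. So as it stands the main claim is unproven.

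The paper's actual argument bypasses heights entirely and is much more elementary. After reducing to $\pcore_{p'}(G)=1$ (the principal block of $G$ equals that of $G/\pcore_{p'}(G)$, which is the unique block of that quotient), Hall--Higman gives that $N:=\pcore_p(G)$ is self-centralizing. \autoref{lem}\eqref{b2} lets $FG$ detect whether $N=P$; if $N<P$ then $P$ cannot be abelian, since an abelian $P$ would centralize $N$ and hence lie in $N$. In the remaining case $G=P\rtimes Q$ with $Q$ acting faithfully, and \autoref{fixed} recovers both $FP^Q$ and $F\Z(P)^Q$ from $FG$; comparing their dimensions (the numbers of $Q$-orbits on $P$ and on $\Z(P)$) decides whether $P=\Z(P)$. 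You should replace your height-theoretic step by an argument of this kind, or else supply a proof that $k_0(B_0)$ is an invariant of the $F$-algebra $FG$ --- which is not known.
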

\begin{proof}
Using an augmentation preserving isomorphism as in \autoref{fixed}, we see that $FG$ determines the principal block $B$ of $G$ up to isomorphism. It is well-known that $B$ is isomorphic to the principal block of $G/\pcore_{p'}(G)$. Since $G$ is $p$-solvable, $G/\pcore_{p'}(G)$ has only one block and we may assume that $\pcore_{p'}(G)=1$ (see \cite[Theorem~10.20]{Navarro}). Then $N:=\pcore_p(G)$ is self-centralizing in $G$ by Hall-Higman. 
By \autoref{lem}, $FG$ determines if $N=P$. 
If $N<P$, then $P$ is non-abelian. Thus, we may assume that $P=N\unlhd G$. By Schur-Zassenhaus, $G=P\rtimes Q$ where $Q$ is a $p'$-group acting faithfully on $P$. Now from \autoref{fixed} we obtain $FP^Q$ and $F\Z(P)^Q$. Clearly, $P$ is abelian if and only if $\dim FP^Q=\dim F\Z(P)^Q$.
The last assertion follows from Bessenrodt~\cite[Theorem~2.1]{Bessenrodtabel} as mentioned earlier. 
\end{proof}

In general $FG$ does \emph{not} determine the commutativity of Sylow $q$-subgroups for $q\ne p$. For example the solvable groups $G=\texttt{SmallGroup}(1152,154124)$ and $H=\texttt{SmallGroup}(1152,154154)$ have the same multiset of irreducible character degrees, but $G$ has a non-abelian Sylow $2$-subgroup while $H$ has an abelian Sylow $2$-subgroup. Hence, for $p\ge 5$ (or $F=\CC$) the group algebras $FG$ and $FH$ are isomorphic. This answers a question of the first author raised in \cite{NavarroIndia}.

\begin{Corollary}
Let $B$ be the principal block of $FG$ where $G$ is $p$-solvable. Then the Morita equivalence class of $B$ determines if $B$ has abelian defect groups. If so, also the isomorphism type of the defect groups is determined. 
\end{Corollary}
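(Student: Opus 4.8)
The plan is to combine the two main results of this section, \autoref{dims} and \autoref{abeldef}, by means of the standard reduction of a principal block to a group algebra. First I would recall that the defect groups of the principal block $B$ of $FG$ are exactly the Sylow $p$-subgroups $P$ of $G$. Setting $\bar G:=G/\pcore_{p'}(G)$, this is a $p$-solvable group with $\pcore_{p'}(\bar G)=1$, so $F\bar G$ has a single block; since $B$ is isomorphic to the principal block of $\bar G$ (exactly as in the proof of \autoref{abeldef}), we obtain $B\cong F\bar G$ as $F$-algebras. Because $\pcore_{p'}(G)$ is a normal $p'$-group, a Sylow $p$-subgroup of $\bar G$ is isomorphic to $P$, and hence to a defect group $D$ of $B$.

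Next I would upgrade the Morita equivalence to an algebra isomorphism. By \autoref{dims}, any two Morita equivalent principal blocks of $p$-solvable groups are isomorphic as $F$-algebras. Consequently, within this class, the Morita equivalence class of $B$ determines the isomorphism type of the algebra $B\cong F\bar G$.

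Finally, I would apply \autoref{abeldef} to the group algebra $F\bar G$: its isomorphism type determines whether the Sylow $p$-subgroup $P\cong D$ is abelian and, if so, the isomorphism type of $P\cong D$. Chaining the three steps, the Morita equivalence class of $B$ determines the isomorphism type of $F\bar G$, which in turn decides whether $D$ is abelian and, in that case, pins down $D$ up to isomorphism. I do not expect a genuine obstacle, since the substantive work has already been done in \autoref{dims} and \autoref{abeldef}; the only point requiring care is the chaining itself. In particular one must invoke \autoref{dims} to pass from the Morita equivalence class to the isomorphism type of the algebra, as \autoref{abeldef} is phrased for isomorphism types of group algebras rather than for Morita equivalence classes, and one should confirm that the reduction $B\cong F\bar G$ leaves the defect group unchanged, which is immediate since $\pcore_{p'}(G)$ has order prime to $p$.
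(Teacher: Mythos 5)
Your proposal is correct and follows essentially the same route as the paper: reduce to the case $B\cong F\bar G$ for a $p$-solvable group $\bar G$ with $\pcore_{p'}(\bar G)=1$ (the paper cites \autoref{main} for this, while you carry out the standard reduction modulo $\pcore_{p'}(G)$ directly, which amounts to the same thing for principal blocks), then use \autoref{dims} to upgrade the Morita equivalence to an algebra isomorphism and conclude with \autoref{abeldef}. The point you flag as needing care --- that \autoref{abeldef} is stated for isomorphism types of group algebras, so \autoref{dims} is indispensable for the chaining --- is exactly the content of the paper's argument.
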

\begin{proof}
By \autoref{main} we may assume that $B=FG$. By \autoref{dims}, the Morita equivalence class of $B$ determines the isomorphism type of $FG$. Now the claim follows from \autoref{abeldef}.
\end{proof}

We use the opportunity to propose a blockwise question for $p$-solvable groups.

\begin{Question}\label{Q}
Let $B$ be a $p$-block of a $p$-solvable group with fusion system $\mathcal{F}$. Does the Morita equivalence class of $B$ (or the isomorphism type) determine $\lvert\pcore_p(\mathcal{F})\rvert$? Here, $\pcore_p(\mathcal{F})$ is just the group $\pcore_p(G)$ in the situation of \autoref{main} (see \cite[Theorem~7.18]{habil}).
\end{Question}

Note that a block neither “knows” if its defect group is normal nor if it is the principal block. For example the principal $2$-block of $FS_3$ is isomorphic to a non-principal $2$-block of $FC_6$.  

The following lemma is already known for $\FF_2$. However, we are not aware of a proof over an algebraically closed field.

\begin{Lemma}\label{mip}
Let $G$ be a group of order $32$. Then the isomorphism type of $FG$ determines $G$ up to isomorphism.
\end{Lemma}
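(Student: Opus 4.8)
The plan is to reduce the statement over the arbitrary algebraically closed field $F$ to the known case $F=\FF_2$, where the modular isomorphism problem for groups of order $32$ has already been settled (see \cite[Introduction]{EickKonovalov}). Writing $A:=\FF_2G$ and $B:=\FF_2H$, what I want is that an $F$-algebra isomorphism $FG\cong FH$, i.e.\ $A\otimes_{\FF_2}F\cong B\otimes_{\FF_2}F$, already forces $A\cong B$ as $\FF_2$-algebras; the group isomorphism $G\cong H$ then follows from the $\FF_2$-result. Since $G$ and $H$ are $2$-groups, $A$ and $B$ are split local $\FF_2$-algebras ($J(A)$ is the augmentation ideal and $A/J(A)=\FF_2$), and this property is preserved under every field extension, which is exactly what makes a descent argument feasible: the obstruction coming from a residue-field extension (as in $\FF_4$ versus $\FF_2\times\FF_2$) is absent here.

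First I would reduce to a finite field. The functor of $\FF_2$-algebra isomorphisms $A\to B$ is represented by an affine $\FF_2$-scheme $\operatorname{Iso}(A,B)$ of finite type, cut out inside $\GL_{32}$ by the polynomial conditions that a linear map respect multiplication and the unit. An isomorphism over $F$ is an $F$-point, so $\operatorname{Iso}(A,B)$ is nonempty; being of finite type over $\FF_2$ it then carries a closed point with residue field some finite extension $\FF_{2^n}$, and hence $\FF_{2^n}G\cong\FF_{2^n}H$. Thus it suffices to descend an $\FF_{2^n}$-isomorphism to $\FF_2$. For this I would pass to the language of forms: $A$ and $B$ both become isomorphic to the fixed algebra $\overline{\FF_2}\,G$ after base change, so they are two $\FF_2$-forms of it, and the isomorphism classes of such forms are classified by the nonabelian cohomology set $\cohom^1(\Gal(\overline{\FF_2}/\FF_2),\Aut(\overline{\FF_2}\,G))$, where $\Aut$ denotes the automorphism group of the group algebra viewed as a linear algebraic group over $\FF_2$. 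By the theorem of Lang--Steinberg this cohomology set is trivial as soon as $\Aut(\overline{\FF_2}\,G)$ is connected, and triviality says precisely that $\overline{\FF_2}\,G$ has a unique $\FF_2$-form up to isomorphism, whence $A\cong B$.

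The hard part is therefore to prove that the automorphism group of the modular group algebra of a $2$-group of order $32$ is connected. Setting $J:=J(\overline{\FF_2}\,G)$, the map $\Aut(\overline{\FF_2}\,G)\to\GL(J/J^2)\cong\GL_d(\overline{\FF_2})$ (with $d$ the minimal number of generators of $G$) has a unipotent kernel, since an automorphism acting trivially on $J/J^2$ raises the radical filtration and is hence unipotent. Connectedness then rests on two points, \emph{neither of which is automatic}: that the image in $\GL(J/J^2)$ is connected, and that this unipotent kernel is connected as well (closed unipotent subgroups can be disconnected in characteristic $p$, e.g.\ $\FF_p\subset\mathbb{G}_a$). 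I expect this to be the main obstacle. In small examples both points hold: the defining relations together with the characteristic-$2$ Frobenius identity $(u+v)^{2}=u^2+v^2$ tend to \emph{enlarge} the image — destroying the orthogonal-type similitude constraints that would otherwise produce two components — to a connected, often parabolic-type subgroup, and the kernel comes out as an affine space; but I see no immediate structural reason valid for every $p$-group.

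Since only finitely many ($51$) groups of order $32$ are involved, a pragmatic alternative to a general connectedness theorem is available: one can verify connectedness of the relevant image group by exhibiting, for each group, enough one-parameter families of algebra automorphisms joining the finite subgroup $\Aut(G)\hookrightarrow\GL_d(\FF_2)$ to the identity. Complementing this, many of the invariants that separate the groups over $\FF_2$ are manifestly base-change invariant — the dimensions of the Loewy layers $J^i/J^{i+1}$, the isomorphism type of $\Z(FG)$, the K\"ulshammer ideal dimensions and the data on maximal elementary abelian subgroups recorded in \autoref{lem} — so the cohomological descent above is only really needed for the (at most a handful of) genuinely field-sensitive pairs, and the Lemma follows by combining these two ingredients with the $\FF_2$-classification.
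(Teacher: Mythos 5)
Your strategy is genuinely different from the paper's, but it contains a gap that you yourself flag and do not close: the entire descent from $F$ to $\FF_2$ rests on the triviality of $\cohom^1(\Gal(\overline{\FF_2}/\FF_2),\Aut(\overline{\FF_2}\,G))$, which via Lang's theorem requires the automorphism group of $\overline{\FF_2}\,G$ to be connected as an algebraic group. You prove this for no group of order $32$, you concede that you ``see no immediate structural reason valid for every $p$-group'', and the proposed fallback --- verifying connectedness for each of the $51$ groups by exhibiting one-parameter families of automorphisms --- is not carried out. The preliminary reduction to a finite field via closed points of the isomorphism scheme is fine, and the Galois-descent formalism is applicable because $\FF_2$ is perfect; but as written the argument is conditional on an unestablished connectedness statement, so the reduction to the known $\FF_2$-classification does not go through.

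Your ``pragmatic'' alternative is also incomplete. The base-change-invariant data you list (Loewy layers, $\Z(FG)$, K\"ulshammer ideals, maximal elementary abelian subgroups), together with the invariants of Propositions~\ref{lem} and \ref{normal} and the isomorphism types of $\Z(G)$ and $G/G'$, still leave three pairs of groups of order $32$ undistinguished --- in the small-group numbering, $(10,14)$, $(30,31)$ and $(32,35)$ --- and you neither identify these ``field-sensitive pairs'' nor supply a tool to separate them. The paper handles exactly these pairs by an invariant absent from your proposal: the cohomology ring $\cohom^*(G,F)\cong F\otimes_{\FF_2}\cohom^*(G,\FF_2)$, which is determined by the algebra $FG$ (it is $\operatorname{Ext}^*_{FG}(F,F)$ for the unique simple module) and whose minimal number of generators is insensitive to the base field; the tables in \cite{CTVZ} then settle the three pairs. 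This direct comparison of field-insensitive invariants avoids descent altogether, so the cohomological machinery, even if completed, would not be needed.
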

\begin{proof}
In addition to the invariants listed in Propositions \ref{lem} and \ref{normal}, also the isomorphism types of $\Z(G)$ and $G/G'$ are determined by $FG$ (see \cite[Lemma~14.2.7]{PassmanASGR}).
After comparing these invariants we are left with three pairs of groups: $(10,14)$, $(30,31)$ and $(32,35)$ where the numbers represent the indices in the small group library. To distinguish these groups we can consider the cohomology rings $\cohom^*(G,F)\cong F\otimes_{\FF_p}\cohom^*(G,\FF_p)$ which are given in \cite[Appendix]{CTVZ}. It can be seen that the minimal number of generators of these rings are different for every pair of groups above.
\end{proof}

\begin{Proposition}\label{2def5}
Let $B$ be a $2$-block of defect $5$ of $FG$ where $G$ is ($2$-)solvable. Then the Morita equivalence class of $B$ determines the defect groups of $B$ up to isomorphism.
\end{Proposition}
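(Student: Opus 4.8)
The plan is to reduce, via the structure theory of \autoref{main}, to a finite family of explicit groups and then to separate the finitely many candidate defect groups of order $32$ by the invariants visible from the Morita equivalence class. First I would invoke \autoref{main} to replace $B$ by a Morita equivalent faithful block of a $2$-solvable group $G$ with $D\in\Syl_2(G)$, $|D|=32$, and $P:=\pcore_2(G)$ self-centralizing modulo the cyclic group $Z:=\pcore_{2'}(G)$, so that $\C_G(P)=\Z(P)\times Z$. As recorded after \autoref{main}, $|G|$ is then bounded in terms of the defect; hence only finitely many groups $G$, and thus finitely many blocks $B$, can occur, and it suffices to check that non-isomorphic $D$ are distinguished by Morita invariants. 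Using \autoref{coexp1} I would dispose of the cases $\exp(D)\in\{16,32\}$ (where $|D|/\exp(D)\le 2$), while $\exp(D)=2$ forces $D\cong C_2^5$, which is fixed by $r(D)$. So I may assume $\exp(D)\in\{4,8\}$.

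Next I would isolate the two clean regimes. From $\C_D(P)=\Z(P)$ one sees that an abelian $D$ must coincide with $P$ and be normal; more to the point, whenever $D$ is abelian its isomorphism type is already determined by the Morita class (Bessenrodt, see \cite{Bessenrodtabel} and the list in the introduction). In the nilpotent regime, i.e. $l(B)=1$ for those defect groups where this forces nilpotency (as it does for most order-$32$ groups by the results of \cite{habil}), Puig's theorem gives $B\sim FD$ with $FD$ basic, so the Morita class determines $FD$ up to isomorphism, and \autoref{mip} then recovers $D$. What remains is to separate, within each pair $(\exp(D),r(D))$, the abelian group from the non-abelian non-nilpotent ones, and the non-nilpotent groups from one another.

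The residual configurations are therefore the non-abelian, non-nilpotent blocks, where the inertial quotient $E$ is nontrivial. Here I would run through the finite list of groups $G$ produced by \autoref{main} and verify that the standard invariants — $\exp(D)$, $r(D)$, $k(B)=\dim_F\Z(B)$, $l(B)$, the isomorphism type of $\Z(B)$ and its Loewy length $LL(\Z(B))$ — already separate non-isomorphic defect groups. Whenever $D$ is normal I would fall back on \autoref{normal}, which supplies the orders of the Jennings subgroups $J_i(D)$ and the dimensions of the Loewy layers of $FD$, and for the most stubborn pairs on the perfect-isometry technique of \autoref{2def4} and \autoref{perfect}.

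The hard part will be precisely these stubborn pairs. Among the $51$ groups of order $32$ there are the pairs $(10,14)$, $(30,31)$, $(32,35)$ (in small-group-library numbering) surviving in the proof of \autoref{mip}, whose $p$-group algebras agree on every invariant short of the cohomology ring. For such $D$ the data furnished by \autoref{normal} coincide, so the distinction must be squeezed out of genuinely block-theoretic quantities — $k(B)$, $l(B)$, the finer structure of $\Z(B)$, or a perfect isometry — attached to the non-normal or non-principal blocks built on them. Establishing that these finer invariants really do differ across every surviving pair, most plausibly by a direct computation over the bounded family of groups from \autoref{main}, is where the main effort lies.
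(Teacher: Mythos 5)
Your skeleton coincides with the paper's: reduce via \autoref{main} to a bounded family of groups, eliminate the extreme exponents with \autoref{coexp1}, settle abelian $D$ by Bessenrodt and nilpotent blocks through the basic algebra $FD$ together with \autoref{mip}, and then separate the survivors by a finite computation using $k(B)$, $l(B)$, \autoref{lem} and \autoref{normal}. The genuine gap is that the decisive step --- actually verifying that these invariants separate every surviving configuration --- is only asserted to be plausible, and the bookkeeping needed even to set up that verification is missing. The paper first observes that $K/Z$ acts faithfully on $P/\Phi(P)$, an elementary abelian group of rank at most $4$ once $\exp(D)\in\{4,8\}$, so $K/Z\in\{1,C_3,C_5,C_7,C_3^2,C_{15},C_7\rtimes C_3\}$; a Schur multiplier argument then forces $Z=1$ unless $K/Z\cong C_3^2$, in which case $Z\le C_3$ and $B$ is one of exactly two non-principal blocks. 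Without this you do not know, for a given group in the family, which block is the relevant one or what $l(B)$ is. The GAP computation then shows that $Z\ne 1$ forces $l(B)=1$ (these blocks are separated from the nilpotent ones by $k(B)$), and the genuinely difficult cases turn out to be the ones with $Z=1$ and $|K|=3$ --- not the mip-hard pairs $(10,14)$, $(30,31)$, $(32,35)$ you single out --- and they are finished by \autoref{lem} (available because $Z=1$ gives $B=FG$) and \autoref{normal}.

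A second, more substantive problem is your proposed fallback for ``stubborn pairs'': the perfect isometry technique of \autoref{perfect} and \autoref{2def4} is not available in the setting of this proposition. The statement is over $F$, and a Morita equivalence of $F$-algebras does not yield a perfect isometry; this is precisely why \autoref{2def4} is formulated over $\mathcal{O}$, and the paper explicitly notes that $\Z(F[S_4\times C_2])$ and $\Z(F[A_4\rtimes C_4])$ are isomorphic over $F$. So if the finite check really did leave a pair requiring a perfect isometry, your argument could not be closed without strengthening the hypothesis to blocks of $\mathcal{O}G$. It happens that no such pair remains, but that is a conclusion of the computation, not something one may assume in advance.
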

\begin{proof}
We assume that $G$ is given as in \autoref{main}. Let $D$ be a defect group of $B$. By \autoref{coexp1}, we may assume that $D$ has exponent $4$ or $8$. Let $P:=\pcore_2(G)$, and let $K\le G$ be a $2$-complement.
Then $K/Z$ acts faithfully on $P/\Phi(P)$. Since $P/\Phi(P)$ is elementary abelian of rank at most $4$, we deduce that $K/Z\in\{1,C_3,C_5,C_7,C_3^2,C_{15},C_7\rtimes C_3\}$. 

Assume first that $|K/Z|\ne 9$. Then $K/Z$ has trivial Schur multiplier and $Z=1$. This gives $B=FG$ and $l(B)=l(G/P)$. In particular, $l(B)=1$ if and only if $B=FD$. 
If, on the other hand, $|K/Z|=9$, then the Schur multiplier is $C_3$. If $Z\cong C_3$, then $K$ is non-abelian and $B$ is one of the two non-principal blocks of $G$.
Now we run through all possible groups $G$ of order $32a$ where $a\in\{1,3,5,7,9,15,21,27\}$ with GAP~\cite{GAP48} and compute the invariants exponent, rank, $k(B)$ and Cartan matrix. It turns out that $Z\ne 1$ only if $l(B)=1$. These cases can be distinguished from the nilpotent blocks (and among each other) by comparing $k(B)$. It remains to handle the case $Z=1$. 
By \autoref{mip}, we may assume that $K\ne 1$. It turns out that the only difficult groups all have $|K|=3$. 
Here the defect groups can be distinguished by using Propositions \ref{lem} and \ref{normal}.
\end{proof}

\begin{Proposition}
Let $B$ be a $3$-block of defect $4$ of $\mathcal{O}G$ where $G$ is $3$-solvable. Then the Morita equivalence class of $B$ determines the defect groups of $B$ up to isomorphism.
\end{Proposition}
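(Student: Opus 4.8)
The plan is to adapt the strategy of \autoref{2def5} to $p=3$ and $|D|=3^4=81$, exploiting that we work over $\mathcal{O}$. First I would remove the nilpotent blocks: over $\mathcal{O}$ a block is nilpotent if and only if it is Morita equivalent to $\mathcal{O}D$, and in that case $D$ is determined by Roggenkamp--Scott (see the introduction). So assume $B$ is non-nilpotent. By \autoref{main} we may take $G$ to be $3$-solvable with defect group $D$ isomorphic to a Sylow $3$-subgroup of $G$ and with $Z:=\pcore_{3'}(G)\le\Z(G)\cap G'$ cyclic; write $P:=\pcore_3(G)$. Finally, \autoref{coexp1} lets us assume $|D|/\exp(D)>3$, that is, $\exp(D)\in\{3,9\}$.

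Next I would bound $G$. Writing $K\le G$ for a $3$-complement, $K/Z$ acts faithfully on $P/\Phi(P)$, which is elementary abelian of rank at most $4$; hence $K/Z$ is a $3'$-subgroup of $\GL_4(3)$, and it is nontrivial since $B$ is non-nilpotent. This leaves only finitely many possibilities for $K/Z$, and the $3'$-part of the Schur multiplier $\cohom^2(K/Z,\CC^\times)$ controls the cyclic group $Z$ (so that $B$ may be a genuine non-principal block rather than $B=FG$). Thus the problem reduces to a finite list of groups $G$ of order $3^4\cdot|K/Z|\cdot|Z|$, which I would enumerate with GAP, computing for each the invariants fixed by the Morita equivalence class: $\exp(D)$, $r(D)$, $k(B)$, $l(B)$, the heights of the ordinary characters (available over $\mathcal{O}$), and the Cartan matrix.

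I expect the overwhelming majority of non-isomorphic defect groups to be separated already by $\exp(D)$, $r(D)$, $k(B)$ and the elementary divisors of the Cartan matrix. For the pairs that survive this test I would bring in the refined invariants. Whenever $P\in\Syl_3(G)$ the defect group is normal, so \autoref{normal} supplies the orders of the Jennings subgroups $J_i(D)$ and the dimensions of the Loewy layers of $FD$; in the principal-block cases $B=FG$ is recovered up to isomorphism by \autoref{dims}, making the full array of group-algebra invariants of \autoref{lem} available as well. The abelian defect groups are then settled directly through \autoref{normal} together with Bessenrodt's theorem.

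The hard part will be any pair of nonabelian (in particular non-normal) defect groups of order $81$ that cannot be told apart by the invariants above --- the $p=3$ counterpart of the pair $(D_8\times C_2,\,M)$ in \autoref{2def4}. For such a pair I would argue exactly as in \autoref{perfect}: working over $\mathcal{O}$ one exhibits explicit $3$-solvable model groups $P\rtimes K$ realizing the two candidate defect groups with the appropriate actions, writes down the relevant columns of their character tables, and shows through the associated function $\mu$ that the two blocks cannot be perfectly isometric. Since a Morita equivalence over $\mathcal{O}$ yields a derived equivalence and hence a perfect isometry, this separates the defect groups. I anticipate that isolating these exceptional pairs and constructing the perfect-isometry obstruction --- rather than the routine enumeration --- is where the real difficulty will lie.
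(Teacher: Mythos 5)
Your reduction follows the paper's route closely: pass to the Külshammer model of \autoref{main}, discard nilpotent blocks over $\mathcal{O}$ via Puig and Roggenkamp--Scott, invoke \autoref{coexp1} to force $\exp(D)\in\{3,9\}$, use the height zero theorem for $p$-solvable groups plus Bessenrodt to dispose of abelian $D$, bound $K/Z$ by its faithful action on $P/\Phi(P)$ and the Schur multiplier to get a finite list, and then separate candidates by $k(B)$, $l(B)$, Cartan data, Külshammer ideals, \autoref{normal} and (for principal blocks) \autoref{lem} via \autoref{dims}. This is all in the paper, modulo the practical point that some of the relevant groups (e.g.\ double covers of a group of order $1296$) are not in the small group library, which is why the paper pins down $\pcore_3(G)$, $G/\F(G)$ and $|Z|\le 2$ explicitly rather than running a naive enumeration.

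The genuine gap is your endgame. After exhausting all the invariants above, the paper is still left with three pairs of groups whose blocks agree on exponent, rank, $k(B)$, $l(B)$, Cartan matrices, heights, Külshammer ideals and Jennings/Loewy data, and it resolves them not by a perfect isometry obstruction but by computing the basic algebras over a finite splitting field in MAGMA and comparing minimal projective resolutions --- a Morita invariant strictly finer than anything a perfect isometry can see. Your proposal to imitate \autoref{perfect} is not guaranteed to work and almost certainly fails for at least one of the surviving pairs: the blocks attached to $(648,75)$ and $(648,82)$ are non-principal with a \emph{single} simple module and identical character-theoretic data, and blocks of this shape are typically perfectly isometric, so there is no obstruction of the kind you want to construct. (Even in the $2$-block case the paper warns that $F[S_4\times C_2]$ and $F[A_4\rtimes C_4]$ have isomorphic centers, so the perfect isometry argument there is already at the edge of what character-level invariants can detect.) To close the proof you would need to replace the proposed perfect-isometry step by a genuinely module-theoretic comparison, as the paper does.
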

\begin{proof}
We assume that $G$ is given as in \autoref{main} (the result holds over $\mathcal{O}$ too). Since $G$ is not necessarily in the small group library, we need a more careful analysis than in \autoref{2def5}. 
Let $D$ be a defect group of $B$. As usual, we may assume that $\exp(D)\le 9$ by \autoref{coexp1}. 
There are only two groups of order $81$ and exponent $3$, namely $C_3^4$ and $3^{1+2}_+\times C_3$. Since they differ by their rank, we may assume that $\exp(D)=9$. 
Since the height zero conjecture holds for $p$-solvable groups, we may assume further that $D$ is non-abelian. There are eight such groups, five of them have rank $2$ and the remaining three have rank $3$.
In \autoref{tab} we refer to the id in the small groups library. 
As mentioned in the introduction, we may assume that $B$ is non-nilpotent.
Since $\pcore_3(G)$ is radical, it is easy to see that $\pcore_3(G)\in\{C_3^3, 3^{1+2}_+,D\}$. 
By comparing the subgroups of $\GL(3,3)$ and $\Aut(D)$ we obtain the possibilities for $G/\F(G)$ where $\F(G)$ denotes the Fitting subgroup of $G$. In all cases $G/\F(G)$ has elementary abelian Schur multiplier. It follows that $|Z|\le 2$, since $Z$ is cyclic. If $|Z|=2$, then $B$ is the unique non-principal block of $G$. Hence, $l(B)=l(G/\F(G))$ if $Z=1$ and $l(B)=l(G/\pcore_3(G))-l(G/\F(G))$ if $|Z|=2$ (here $G/\pcore_3(G)$ is a double cover of $G/\F(G)$). These results are summarized in \autoref{tab}.

\begin{table}[ht]
\begin{center}
\caption{Some $3$-blocks of defect $4$ of $3$-solvable groups}\label{tab}
\begin{tabular}{ccccc}
id of $D$&rank of $D$&$\pcore_3(G)$&$G/\F(G)$&$l(B)$\\\hline\hline
3&3&$D$&$C_2$&2\\
&&$D$&$C_2^2$&4, 1\\
&&$C_3^3$&$\SL(2,3)$&3\\
&&$C_3^3$&$\GL(2,3)$&6\\\hline
4&2&$D$&$C_2$&2\\\hline
7&3&$D$&$C_2$&2\\
&&$D$&$C_2^2$&4, 1\\
&&$C_3^3$&$A_4$&2, 1\\
&&$C_3^3$&$S_4$&4, 2\\
&&$C_3^3$&$A_4\times C_2$&4, 2\\
&&$C_3^3$&$S_4\times C_2$&8, 4, 2, 1\\ 
&&$C_3^3$&$C_{13}\rtimes C_3$&5\\
&&$C_3^3$&$(C_{13}\rtimes C_3)\times C_2$&10\\
&&$3^{1+2}_+$&$\SL(2,3)$&3\\
&&$3^{1+2}_+$&$\GL(2,3)$&6\\\hline
8&2&$D$&$C_2$&2\\
&&$D$&$C_2^2$&4, 1\\
&&$3^{1+2}_+$&$\SL(2,3)$&3\\
&&$3^{1+2}_+$&$\GL(2,3)$&6\\\hline
9&2&$D$&$C_2$&2\\
&&$D$&$C_2^2$&4, 1\\
&&$3^{1+2}_+$&$\SL(2,3)$&3\\
&&$3^{1+2}_+$&$\GL(2,3)$&6\\\hline
10&2&$D$&$C_2$&2\\\hline
13&3&$D$&$C_2$&2\\
&&$D$&$C_2^2$&4, 1\\\hline
14&2&$D$&$C_2$&2\\
&&$D$&$C_4$&4\\
&&$D$&$C_2^2$&4, 1\\
&&$D$&$C_8$&8\\
&&$D$&$D_8$&5, 2\\
&&$D$&$Q_8$&5\\
&&$D$&$SD_{16}$&7
\end{tabular}
\end{center}
\end{table}

In the last column there are sometimes several possibilities according to the chosen double cover of $G/\F(G)$ (all listed possibilities actually occur). If $l(B)=10$, then $D$ is uniquely determined. 
Putting this case aside, the only large groups not contained in the small group library correspond to line 11 in \autoref{tab}. These groups are double covers of $\texttt{SmallGroup}(1296,3490)$ and can be constructed in GAP.
For any finite group $H$ and $i\ge 0$ let $K_i(H)$ be the number of $H$-conjugacy classes in $\{h^{3^i}:h\in H\}$. Then the dimension of the $i$-th Külshammer ideal of $B$ is given by $K_i(G)$ if $Z=1$ and $K_i(G)-K_i(G/Z)$ if $|Z|=2$ (these numbers are invariant under Morita equivalence by \cite[Corollary~5.3]{HHKM}).
Now using \autoref{normal} and \autoref{lem} we can distinguish the defect groups up to three remaining pairs: $(648,75)\leftrightarrow(648, 82)$, $(648,531)\leftrightarrow(648,532)$, $(1296,2889)\leftrightarrow(1296, 2890)$ (small group ids). For all pairs the Sylow ids are $8$ and $9$. For the first pair we have non-principal blocks with normal defect group and only one simple module. For the last two pairs we have principal blocks with non-normal defect groups. To handle these cases we construct the basic algebra of $B$ over a finite splitting field in MAGMA~\cite{Magma}. Then we can compare minimal projective resolutions for instance. The decomposition of the projective modules in such resolutions do not depend on the size of the field. In this way we complete the proof.
\end{proof}

\section*{Acknowledgment}
The authors like to thank Bettina Eick, Karin Erdmann, David Green, Burkhard Külshammer, Pierre Landrock and Leo Margolis for answering some questions. The first author is partially supported by the Spanish Ministerio de Educaci\'on y Ciencia Proyectos  MTM2016-76196-P and Prometeo II/Generalitat Valenciana. The second author is supported by the German Research Foundation (project SA \mbox{2864/1-1}).

\end{document}